\begin{document}

\newtheorem{theorem}{Theorem}    
\newtheorem{proposition}[theorem]{Proposition}
\newtheorem{conjecture}[theorem]{Conjecture}
\def\theconjecture{\unskip}
\newtheorem{corollary}[theorem]{Corollary}
\newtheorem{lemma}[theorem]{Lemma}
\newtheorem{sublemma}[theorem]{Sublemma}
\newtheorem{observation}[theorem]{Observation}
\theoremstyle{definition}
\newtheorem{definition}{Definition}
\newtheorem{notation}[definition]{Notation}
\newtheorem{remark}[definition]{Remark}
\newtheorem{question}[definition]{Question}
\newtheorem{questions}[definition]{Questions}
\newtheorem{example}[definition]{Example}
\newtheorem{problem}[definition]{Problem}
\newtheorem{exercise}[definition]{Exercise}

\numberwithin{theorem}{section}
\numberwithin{definition}{section}
\numberwithin{equation}{section}

\def\earrow{{\mathbf e}}
\def\rarrow{{\mathbf r}}
\def\uarrow{{\mathbf u}}
\def\tpar{T_{\rm par}}
\def\apar{A_{\rm par}}

\def\reals{{\mathbb R}}
\def\torus{{\mathbb T}}
\def\heis{{\mathbb H}}
\def\integers{{\mathbb Z}}
\def\rationals{{\mathbb Q}}
\def\naturals{{\mathbb N}}
\def\complex{{\mathbb C}\/}
\def\distance{\operatorname{distance}\,}
\def\support{\operatorname{support}\,}
\def\dist{\operatorname{dist}\,}
\def\Span{\operatorname{span}\,}
\def\degree{\operatorname{degree}\,}
\def\kernel{\operatorname{kernel}\,}
\def\dim{\operatorname{dim}\,}
\def\codim{\operatorname{codim}}
\def\trace{\operatorname{trace\,}}
\def\Span{\operatorname{span}\,}
\def\dimension{\operatorname{dimension}\,}
\def\kernel{\operatorname{Ker}}
\def\p{\partial}
\def\rp{{ ^{-1} }}
\def\Re{\operatorname{Re\,} }
\def\Im{\operatorname{Im\,} }
\def\ov{\overline}
\def\eps{\varepsilon}
\def\lt{L^2}
\def\diver{\operatorname{div}}
\def\curl{\operatorname{curl}}
\newcommand{\norm}[1]{ \|  #1 \|}
\def\expect{\mathbb E}
\def\variance{\operatorname{Var}}
\def\prob{\operatorname{Pr}}
\def\bull{$\bullet$\ }

\newcommand{\Norm}[1]{ \left\|  #1 \right\| }
\newcommand{\set}[1]{ \left\{ #1 \right\} }
\def\one{\mathbf 1}
\newcommand{\modulo}[2]{[#1]_{#2}}

\def\scriptf{{\mathcal F}}
\def\scriptg{{\mathcal G}}
\def\scriptm{{\mathcal M}}
\def\scriptb{{\mathcal B}}
\def\scriptc{{\mathcal C}}
\def\scriptt{{\mathcal T}}
\def\scripts{{\mathcal S}}
\def\scripti{{\mathcal I}}
\def\scripte{{\mathcal E}}
\def\scriptv{{\mathcal V}}
\def\scriptS{{\mathcal S}}
\def\scripta{{\mathcal A}}
\def\scriptr{{\mathcal R}}
\def\scripto{{\mathcal O}}
\def\scripth{{\mathcal H}}
\def\scriptd{{\mathcal D}}
\def\scriptl{{\mathcal L}}
\def\scriptn{{\mathcal N}}
\def\frakv{{\mathfrak V}}

\author{Michael Christ}
\address{
        Michael Christ\\
        Department of Mathematics\\
        University of California \\
        Berkeley, CA 94720-3840, USA}
\email{mchrist@math.berkeley.edu}
\thanks{The author was supported in part by NSF grant DMS-0901569.}

\date{March 9, 2010. Revised August 18, 2011.}

\title
{On Random Multilinear Operator Inequalities} 


\maketitle

\section{Introduction}
A venerable principle 
holds that the Fourier transform of a measure is ``small'' in a meaningful
sense when linear structure is absent, 
in certain circumstances. 
For instance: 
\begin{enumerate}
\item
If $\mu$ is supported
on an appropriately curved submanifold of  $\reals^d$,
then $\widehat{\mu}(\xi)\to 0$ at a certain rate as $|\xi|\to\infty$.
\item
If $\mu_\omega$ is a random measure, with appropriate properties, 
then for typical $\omega$, $\widehat{\mu_\omega}$ has
small supremum norm; or in other contexts, 
$\widehat{\mu_\omega}(\xi)$ tends to zero at an appropriate rate as $|\xi|\to\infty$.
\item
Let $p$ be a large prime, and for $x\in\integers_p$ let $\mu_p(x)=1$ if
$x$ is a quadratic residue modulo $p$, and $\mu_p(x)=0$ otherwise.
Then with a natural normalization of the Fourier transform, 
$|\widehat{\mu_p}(\xi)|\le Cp^{-1/2}$ for all $\xi\ne 0$,
whereas $\widehat\mu_p(0)\asymp 1$.
\end{enumerate}

Smallness of the Fourier transform may be reformulated in terms of a bilinear expression
via the identity 
$\norm{\widehat{\mu}}_\infty = \sup_{f,g\ne 0} \big|\iint f(x)g(y)\,d\mu(x-y)\big|/\norm{f}_2\norm{g}_2$. 
This formulation suggests multilinear extensions, involving e.g.\ 
$\iint f(x)g(y)h(x+y)d\mu(x-y)$.
While various possible inequalities can be considered, we are primarily 
interested in bounds in terms of $\norm{f}_p\norm{g}_q\norm{h}_r$ 
with $p^{-1}+q^{-1}+r^{-1}=1$;
such quantities scale naturally from the perspective of ergodic theory.

If $G$ is a finite Abelian group and $\mu:G\to\complex$ has $\norm{\widehat{\mu}}_\infty\ll \norm{\mu}_1$,
under appropriate normalizations, then $\mu$ is sometimes said to be {\em uniform} \cite{taovu}.
There are higher-order notions of uniformity, due to Gowers \cite{taovu}, which have a multilinear
character. Gowers uniformity is closely related to the type of smallness studied in this
paper, but here we are dealing with rather singular measures.

In this paper we investigate the extension of this smallness principle to higher-degree multilinear expressions,
for natural families of random measures.
In \S\ref{section:example} we give an example which demonstrates that
linear structure is no longer the natural consideration. Indeed,
for one of the most canonical (deterministic) examples of all, the natural trilinear extension satisfies no 
smallness condition, due to the presence of {\em quadratic} structure. 
In \S\ref{section:results} we state our main results, which concern two classes of random measures.
For one of these classes, our results are quite satisfactory, but for the other they 
apply only for a certain range of parameters which may not be optimal.

The author is indebted to Patrick LaVictoire for useful discussions.

\section{A Nonlinear Obstruction} \label{section:example}

For convenience, the following example is given in the context of certain finite groups,
rather than $\integers$; there are no essential differences.
Let $d\ge 1$ and let $p\in\naturals$ be any prime.
Let $\integers_p$ be the finite cyclic group $\integers/ p\integers$.
Let $G_p = \integers_p^d\times\integers_p$.
For $x=(x_1,\cdots,x_{d})\in \integers_p^d$, we write 
$|x|^2 = \sum_{j=1}^d x_j^2$.
Write $G_p\owns x=(x',x_{d+1})\in \integers_p^d\times\integers_p$.
Let $\mu_p$ be the function on $G_p$ defined by
\[ \mu_p(x',x_{d+1})=
\begin{cases} p^{-d} &\text{ if } x_{d+1}=|x'|^2 \\
0 &\text{ otherwise}
\end{cases},
\]
\[ m_p(x)=
p^{-d-1} 
\ \text{for all } x\in G_p,
\]
and $\nu_p=\mu_p-m_p$.
$m_p$ satisfies 
$\widehat{m_p}(\xi)=\widehat{\mu_p}(\xi)$ for $\xi=0$,
and 
$\widehat{m_p}(\xi)=0$ for all $\xi\ne 0$.

Define the Fourier transform
$\widehat{f}(\xi) = 
\sum_{x\in G_p} f(x)e^{-2\pi i \xi\cdot x/p}$,
where $\xi\cdot x = \sum_{j=1}^{d+1}x_j\xi_j$
and $\xi\in G_p$.
By a well-known identity for Gauss sums, 
\[
\max_{\xi\ne 0} |\widehat{\mu_p}(\xi)|= p^{-d/2},
\]
and consequently
\[
\max_\xi |\widehat{\nu_p}(\xi)|= p^{-d/2}.
\]
Therefore by Plancherel's identity, there is a bilinear inequality
\begin{equation} \label{bilinearexample}
\big|
\sum_{x,y\in G_p} f(x)g(y)\nu_p(x-y)
\big|
\le p^{-d/2} \norm{f}_2 \norm{g}_2
\end{equation}
where $\norm{f}_q$ denotes the $\ell^q(G_p)$ norm.

Does \eqref{bilinearexample} extend to a trilinear inequality 
\begin{equation} \label{eq:nope}
\big|
\sum_{x,y\in G_p} f(x)g(y)h(x+y)\nu_p(x-y)
\big|
\le Cp^{-\rho} \norm{f}_2 \norm{g}_2\norm{h}_\infty
\end{equation}
for some $\rho=\rho(d)>0$ independent of $p$?

\begin{observation}
No inequality of the form \eqref{eq:nope} is valid.
\end{observation}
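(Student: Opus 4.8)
The plan is to show that for every prime $p$ there exist functions $f,g,h$ on $G_p$ for which the left side of \eqref{eq:nope} equals $\norm{f}_2\norm{g}_2\norm{h}_\infty$ exactly; the ratio of the two sides is then identically $1$, which leaves no room for a decaying factor $p^{-\rho}$, and \eqref{eq:nope} cannot hold.

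First I would record that $\mu_p=p^{-d}\one_P$, where $P=\{(s,|s|^2):s\in\integers_p^d\}\subseteq G_p$ is the discrete paraboloid, so that the left side of \eqref{eq:nope} equals $T_\mu-T_m$ with $T_\mu=p^{-d}\sum_{x-y\in P}f(x)g(y)h(x+y)$ and $T_m=p^{-d-1}\sum_{x,y\in G_p}f(x)g(y)h(x+y)$. In $T_\mu$ the condition $x-y\in P$ is the equation $x_{d+1}=y_{d+1}+|x'-y'|^2$ in the notation $x=(x',x_{d+1})$, so after eliminating $x_{d+1}$ the sum runs over $(x',y',y_{d+1})\in\integers_p^{2d+1}$, and there $x+y=(x'+y',\,2y_{d+1}+|x'-y'|^2)$.

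Next I would take the unimodular functions $f(x',x_{d+1})=e^{2\pi i(-2|x'|^2+x_{d+1})/p}$, $g(y',y_{d+1})=e^{2\pi i(-2|y'|^2-y_{d+1})/p}$ and $h(z',z_{d+1})=e^{2\pi i|z'|^2/p}$, for which $\norm{f}_2=\norm{g}_2=|G_p|^{1/2}=p^{(d+1)/2}$ and $\norm{h}_\infty=1$. Substituting into $T_\mu$, the phase of the summand indexed by $(x',y',y_{d+1})$ is $-2|x'|^2+y_{d+1}+|x'-y'|^2-2|y'|^2-y_{d+1}+|x'+y'|^2$, which vanishes identically in $\integers_p$ because $|x'-y'|^2+|x'+y'|^2=2|x'|^2+2|y'|^2$; hence every summand is $1$ and, the index set having $p^{2d+1}$ elements, $T_\mu=p^{-d}\cdot p^{2d+1}=p^{d+1}$. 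For $T_m$ I would observe that the variable $x_{d+1}$ enters $f(x)g(y)h(x+y)$ only through the factor $e^{2\pi i x_{d+1}/p}$ coming from $f$ (it is absent from $g(y)$ and from $h(x+y)$), so $\sum_{x_{d+1}\in\integers_p}e^{2\pi i x_{d+1}/p}=0$ forces $T_m=0$. Therefore the left side of \eqref{eq:nope} equals $|T_\mu-T_m|=p^{d+1}=\norm{f}_2\norm{g}_2\norm{h}_\infty$ for every prime $p$, and letting $p\to\infty$ contradicts \eqref{eq:nope} for any $\rho>0$.

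The one substantive step is choosing this ansatz, and it is also where the ``quadratic structure'' promised in the introduction appears: because $h$ is evaluated at $x+y$ while $\nu_p$ pins $x-y$ to the paraboloid, the identity $|x'-y'|^2+|x'+y'|^2=2(|x'|^2+|y'|^2)$ lets the quadratic phases of $f$, $g$ and $h$ cancel completely, making the trilinear sum perfectly resonant — the exact opposite of what happens in the genuinely bilinear estimate \eqref{bilinearexample}, whose validity rests on $\norm{\widehat{\nu_p}}_\infty=p^{-d/2}$ being small. The extra $\integers_p$ coordinate plays only the auxiliary role of carrying a linear character that annihilates the mean-zero correction $m_p$ without affecting $T_\mu$. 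The expected difficulty is thus conceptual rather than technical — recognizing that such complete resonance is available at all, in contrast to \eqref{bilinearexample} — and once the ansatz is in hand the verification is a short computation.
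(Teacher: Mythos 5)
Your proof is correct and is in fact the same as the paper's: you choose exactly the same unimodular functions $f$, $g$, $h$ built from quadratic phases, verify the identical algebraic identity $|x'+y'|^2+|x'-y'|^2=2|x'|^2+2|y'|^2$ that makes the $\mu_p$ term fully resonant, and kill the $m_p$ term by the same observation that $x_{d+1}$ enters only through a nontrivial linear character. The only cosmetic difference is that you eliminate $x_{d+1}$ at the start rather than substituting into a joint phase $\Phi(x,y)$, but the computation is otherwise identical.
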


To prove this, set 
\begin{align*}
h(x)&=e^{2\pi i |x'|^2/p}
\\
f(x) &= e^{2\pi i [x_{d+1}-2|x'|^2]/p}
\\
g(x) &= e^{2\pi i [-x_{d+1}-2|x'|^2]/p}.
\end{align*}
Then
$f(x)g(y)h(x+y)=e^{2\pi i\Phi(x,y)/p}$ where
\[
\Phi(x,y) = 
|x'+y'|^2 + x_{d+1}-y_{d+1}-2|x'|^2-2|y'|^2.
\]
For $(x,y)$ in the support of $\mu_p(x-y)$,
$x_{d+1}-y_{d+1}\equiv |x'-y'|^2$ and consequently
\[
\Phi(x,y) = |x'+y'|^2 + |x'-y'|^2-2|x'|^2-2|y'|^2
\equiv 0.
\]
Therefore the contribution of $\mu_p$ to our trilinear form equals
\[
\sum_{x,y\in G_p} f(x)g(y)h(x+y)\mu_p(x-y)
= 
\sum_{x,y: y_{d+1}-x_{d+1}=|y'-x'|^2} p^{-d}
= p^{2d+1}p^{-d}
= p^{d+1},
\]
while
\[
\norm{f}_2 \norm{g}_2\norm{h}_\infty
= (p^{d+1})^{1/2}
\cdot  (p^{d+1})^{1/2}
\cdot 1
= p^{d+1}.
\]
On the other hand,
\[
\sum_{x,y\in G_p} f(x)g(y)h(x+y) m_p(x-y)
= 
p^{-d-1} \sum_{x,y\in G_p} e^{2\pi i \Phi(x,y)/p}.
\]
For fixed $y,x'$, $\Phi((x',t),y)$ takes the form $c(x',y)+t$,
and 
\[\sum_{t\in \integers_p} e^{2\pi i (t+c(x',y))/p}
=e^{2\pi i c(x',y)/p}\sum_{t\in \integers_p} e^{2\pi i t/p} \equiv 0.\]
Thus in all,
\[
\sum_{x,y\in G_p} f(x)g(y)h(x+y)\nu_p(x-y)
=
\sum_{x,y\in G_p} f(x)g(y)h(x+y)\mu_p(x-y)
= 
\norm{f}_2 \norm{g}_2\norm{h}_\infty;
\]
there is no cancellation in the sum.

\section{Results} \label{section:results}

Our setting is the set $\integers$ of all integers, and
we will work in terms of norms $L^p(\integers)=\ell^p$.
Let $\Omega$ be a probability space, equipped with
jointly independent, identically distributed,
$\{0,1\}$--valued selector variables $\{s(\omega,x): x\in\integers\}$,
such that $s(\omega,x)=1$ with probability $p$
and $=0$ with probability $1-p$.
Let $N$ be any large positive integer.
Let $r(\omega,x)
= (Np)^{-1}s(\omega,x) -N^{-1}$
for integers $x\in[-N,N]$,
and $r(\omega,x)=0$ otherwise.
Thus $\expect_\omega r(\omega,x)
= p(Np)^{-1}-N^{-1}=0$ for $x\in[-N,N]$.

Let $\{L_j: 0\le j\le M\}$ be $\integers$-linear mappings
from $\integers$ to $\integers$.
Assume none of the
$L_j$ are scalar multiples, over $\rationals$, of $(x,y)\mapsto x$,
that none are scalar multiples of $(x,y)\mapsto y$, 
and no $L_i$ is a scalar multiple of $L_j$.

In Theorem~\ref{thm:main}
we study multilinear operators
\begin{equation} \label{Tomega}
T_\omega(f,g_1,\cdots,g_M)(x)
= \sum_y f(y)r(\omega,L_0(x,y))\prod_{j=1}^M g_j(L_j(x,y)).
\end{equation}
These depend also on $N$, and we are interested in their properties as $N\to\infty$.
Define the operator norm
\[ 
\norm{T_\omega}_{\text{op}}= \sup_{f,g_1,\cdots,g_M}\norm{T_\omega(f,g_1,\cdots,g_M)}_2
\]
where 
the supremum is taken over all functions satisfying $\norm{f}_2\le 1$
and $\norm{g_j}_\infty\le 1$ for all $j$.

\begin{theorem} \label{thm:main}
Suppose that $M\ge 1$ 
and $0\le\gamma<2^{-M}$.
There exist $\eps>0$ and $C<\infty$
such that 
for all $N\ge 1$ and $p\ge N^{-\gamma}$,
\begin{equation}
\expect_\omega\norm{T_\omega}_{\text{op}}
\le CN^{-\eps}.
\end{equation}
\end{theorem}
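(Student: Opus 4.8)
The plan is to bound $\expect_\omega \norm{T_\omega}_{\mathrm{op}}$ by a moment method, estimating $\expect_\omega \norm{T_\omega}_{\mathrm{op}}^{2k}$ for a well-chosen even power $2k$ and then extracting decay in $N$. First I would dualize: testing against $h$ with $\norm h_2\le 1$, the quantity to control is $\sum_{x,y} f(y)\,h(x)\,r(\omega,L_0(x,y))\prod_{j=1}^M g_j(L_j(x,y))$, a multilinear form in $f,h$ (which enter with $\ell^2$) and $g_1,\dots,g_M$ (which enter with $\ell^\infty$, i.e.\ via pointwise bounds $|g_j|\le 1$). Since each $g_j$ contributes only a magnitude, the worst case is $g_j\equiv 1$ after absorbing phases, so the real object is the randomly weighted bilinear form $B_\omega(f,h)=\sum_{x,y} f(y)h(x)\,r(\omega,L_0(x,y))\,\mathbf 1_{E}(x,y)$, where $E$ records whatever constraints the $g_j$'s and the nondegeneracy of the $L_j$ impose; $\norm{B_\omega}$ is then an operator norm of a random matrix, and $\norm{T_\omega}_{\mathrm{op}}^{2k}\le \trace\big((B_\omega^* B_\omega)^k\big)$ in the appropriate sense.

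Next I would expand the trace into a sum over closed combinatorial walks of length $2k$ in the variables, and take $\expect_\omega$. Because $r(\omega,\cdot)$ is a centered, independent family supported on $[-N,N]$ with $\expect r^2 \asymp (N^2 p)^{-1}$ and higher moments $\expect |r|^m \asymp (Np)^{-(m-1)}N^{-1}$ for $m\ge 2$, a term in the expansion survives only if every value of $L_0$ appearing along the walk is repeated. The key combinatorial input is that, by the hypothesis that no $L_j$ is a rational multiple of $x$ or of $y$ and no two $L_i,L_j$ are proportional, each coincidence $L_0(x^{(a)},y^{(a)})=L_0(x^{(b)},y^{(b)})$ is a genuine codimension-one linear constraint that is ``independent enough'' — it pins down one free parameter without collapsing the others. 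Counting the surviving configurations: a walk contributing to the $2k$-th moment has at most $k$ distinct values of $L_0$ (since each must be hit $\ge 2$ times), hence the number of free summation parameters is reduced, and the count of such walks is $O\!\big(N^{(k+1)+o(1)}\big)$ while the associated product of moments of $r$ is of size $(N^2p)^{-k}\cdot(\text{correction for multiplicities})$. The multiplicity correction is where $p\ge N^{-\gamma}$ enters: if some $L_0$-value is hit $m\ge 3$ times, one pays an extra factor $(Np)^{-(m-2)}$ relative to the ``all doubles'' case, and $p\ge N^{-\gamma}$ with $\gamma<2^{-M}$ keeps these under control. Assembling, $\expect_\omega \norm{T_\omega}_{\mathrm{op}}^{2k}\le C^k N^{-2k\eps'+o(1)}$ for some $\eps'>0$ depending on $k,M,\gamma$, and choosing $k$ large (to absorb the $o(1)$) and then taking $2k$-th roots gives $\expect_\omega\norm{T_\omega}_{\mathrm{op}}\le CN^{-\eps}$ by Jensen.

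The main obstacle — and the place the restriction $\gamma<2^{-M}$ is forced — is the bookkeeping of high-multiplicity coincidences among the $M+1$ linear forms simultaneously. When several $L_j(x,y)$ are frozen at once (because the corresponding $g_j$ is supported on a sparse set, the worst case being essentially a single point so that $|g_j|\le 1$ is tight), the number of independent constraints can be as large as $M$, and each extra constraint both cuts a summation variable and is ``paid for'' by the selector probability $p$ rather than by $N$; balancing the gain $N^{-1}$ per reduced variable against the loss $p^{-1}\le N^{\gamma}$ per selector interaction produces exactly the threshold $\gamma<2^{-M}$. I expect the cleanest route is to organize the walk-expansion so that at each vertex one tracks a spanning set of the linear constraints active there, prove a deterministic ``no cheap cycles'' lemma from the nondegeneracy hypotheses on $\{L_j\}$, and feed that into the moment bound; getting the power of $p$ in the multiplicity correction exactly right, uniformly over all walk shapes, is the delicate step, and it is where the proof will either extend past $2^{-M}$ or (as the introduction hints) stop short of the optimal range.
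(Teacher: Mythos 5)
Your plan is essentially the paper's proof of Proposition~\ref{prop:easier} (the trace-moment/walk-counting argument), but that proposition controls only the weaker quantity
$\sup_{g_1,\cdots,g_M}\expect_\omega\sup_f\norm{T_\omega(f,g_1,\cdots,g_M)}_2$, not the quantity
$\expect_\omega\sup_{f,g_1,\cdots,g_M}\norm{T_\omega(f,g_1,\cdots,g_M)}_2$ that Theorem~\ref{thm:main} asserts. The step
``Since each $g_j$ contributes only a magnitude, the worst case is $g_j\equiv 1$ after absorbing phases'' is the gap. The phases of $g_j(L_j(x,y))$ depend jointly on $(x,y)$ and cannot be absorbed into $f(y)$ and $h(x)$; and more importantly, once $\sup_{g_j}$ sits inside $\expect_\omega$ the functions $g_j$ may depend on $\omega$, so in the expansion of $\trace\big((B_\omega^*B_\omega)^k\big)$ you cannot push $\expect_\omega$ through $\prod r(\omega,\cdot)\cdot\prod g_j(\cdot)$ and use $\expect_\omega\, r=0$ to kill walks with a singleton $L_0$-value. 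Fixing $g_j$ first and then taking expectation is exactly what makes Proposition~\ref{prop:easier} tractable (and gives the full range $\gamma<1$); the hard part of Theorem~\ref{thm:main} is the interchange, and your moment expansion offers no mechanism to handle it — nor does it actually produce the threshold $2^{-M}$; your walk/multiplicity bookkeeping with $g_j\equiv 1$ yields $\gamma<1$, independent of $M$.

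The paper's route is structurally different. It performs a descending induction on the degree of multilinearity via a deterministic Cauchy--Schwarz (van der Corput/Gowers-type) step, Lemma~\ref{lemma:CS}: squaring $|\scriptt(f_1,\cdots,f_M,\rho)|$ and substituting $y'=y+z$ removes one function and replaces the random density $\rho$ by $\rho^z(u)=\rho(u)\rho(u+L_0(0,z))$, at the cost of a factor $N^{1/2}$ and a maximum over $z$. After $M-1$ such steps the remaining bilinear form is a convolution, whose operator norm is $\norm{\widehat{\rho_{\omega,z}}}_\infty$, and only at this point does a moment method enter (Lemma~\ref{lemma:secondbigq}), now applied to the \emph{deterministically} squared object $\rho_{\omega,z}(x)=\prod_{i\in I}r(\omega,x+z_i)$ with $|I|=2^{M-k}$. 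The supremum over the $g_j$'s has been eliminated \emph{before} any expectation is taken, so the expectation/supremum interchange never arises; and the exponent $2^{-M}$ is forced by the doubling of $|I|$ at each Cauchy--Schwarz step, not by balancing walk multiplicities against selector probabilities. If you want a moment-method/entropy route, the paper does something in that spirit for the fully-independent random matrix variant (Theorem~\ref{thm:morerandom}), but even there it controls the supremum over test sets by an explicit union bound together with Chernoff, not by assuming the extremizer is trivial.
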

\noindent
The constant $C$ is independent of $N$.
We do not know whether the conclusion may hold for a 
larger range of exponents $\gamma$.

Of course
\[\expect_\omega
\sup_{f,g_1,\cdots,g_M}\norm{T_\omega(f,g_1,\cdots,g_M)}_2 \ge
\sup_{g_1,\cdots,g_M}\expect_\omega
\sup_f\norm{T_\omega(f,g_1,\cdots,g_M)}_2.\]
The latter quantity is easier to analyze; see Proposition~\ref{prop:easier},
which gives a satisfactory bound for all $\gamma<1$, for all $M$.

An ergodic-theoretic consequence is as follows.
Let $T$ be an invertible measure-preserving transformation
on a probability space $(X,\mu)$.
For each $\omega\in\Omega$,
specify a subsequence $(n_k(\omega): k=1,2,\cdots)$ of the natural numbers,
as follows. 
Let $\gamma\in (0,1)$. 
Let $\Omega$ be a probability space equipped with a family
of jointly independent random variables $\{s_n(\omega): n\in\naturals\}$
such that $s_n(\omega)=1$ with probability $n^{-\gamma}$,
and $s_n(\omega)=0$ otherwise. 
For each $\omega\in\Omega$, specify the random subsequence $(n_k(\omega))_{k\in\naturals}$ to
consist of all $n\in\naturals$ for which $s_n(\omega)=1$, listed in increasing order.

It has been proved \cite{austinpleasant1},\cite{hostkra},\cite{tao} that
for all $f_1,\cdots,f_M\in L^\infty(X)$,
\begin{equation} \label{eq:RTknown}
\lim_{N\to\infty} N^{-1}\sum_{k=1}^N f_1(T^{k}(x)) f_2(T^{2k}(x)) \cdots
f_M(T^{Mk}(x)) \text{ exists in } L^1(X,d\mu(x)).  \end{equation}
This fundamental result, together with Theorem~\ref{thm:main}, give
\begin{theorem} \label{thm:multimeanconvergence}
If $0\le \gamma<2^{-M+1}$ then
for almost every $\omega\in\Omega$,
for all $f_1,\cdots,f_M\in L^\infty(X)$,
\begin{equation}
\lim_{N\to\infty} N^{-1}\sum_{k=1}^N f_1(T^{n_k}(x)) f_2(T^{2n_k}(x)) \cdots
f_M(T^{Mn_k}(x)) \text{ exists in } L^1(X,d\mu(x)).
\end{equation}
\end{theorem}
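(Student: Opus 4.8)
The plan is to deduce the result from the known mean ergodic theorem \eqref{eq:RTknown} together with Theorem~\ref{thm:main}, via transference and a dyadic decomposition. By homogeneity and multilinearity I may assume $\norm{f_i}_\infty\le 1$; by passing to the $T$-invariant sub-$\sigma$-algebra generated by countably many functions I may assume $L^2(X,\mu)$ separable (the general case follows by a routine exhaustion); and since $\mu(X)=1$ it suffices to prove convergence in $L^2(X)$. Put $F(n)=\prod_{i=1}^M f_i\circ T^{in}$, $W_{\tilde N}(\omega)=\sum_{n\le \tilde N}s_n(\omega)$, and $\bar W_{\tilde N}=\expect_\omega W_{\tilde N}=\sum_{n\le \tilde N}n^{-\gamma}\sim(1-\gamma)^{-1}\tilde N^{1-\gamma}$. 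The $N$-th average in the statement equals $B_{\tilde N}(\omega):=W_{\tilde N}(\omega)^{-1}\sum_{n\le \tilde N}s_n(\omega)F(n)$ with $\tilde N=n_N(\omega)$, and $n_N(\omega)\to\infty$ a.s.\ (Borel--Cantelli, since $\sum n^{-\gamma}=\infty$); so it is enough to show that $B_{\tilde N}(\omega)$ converges in $L^2(X)$ as $\tilde N\to\infty$ through all positive integers, for a.e.\ $\omega$.

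First I compare with deterministic averages. Let $A=\lim_N N^{-1}\sum_{n\le N}F(n)\in L^1(X)$, which exists by \eqref{eq:RTknown}. Summation by parts, using that the weights $n^{-\gamma}/\bar W_{\tilde N}$ constitute a regular summation method, shows that $\tilde B_{\tilde N}:=\bar W_{\tilde N}^{-1}\sum_{n\le \tilde N}n^{-\gamma}F(n)\to A$ in $L^1(X)$, hence — all functions in sight being bounded by $1$ — in $L^2(X)$. A Chernoff bound and Borel--Cantelli give, for a.e.\ $\omega$ and all large $\tilde N$, that $W_{\tilde N}(\omega)\sim\bar W_{\tilde N}$ and $|W_{\tilde N}(\omega)-\bar W_{\tilde N}|=O(\bar W_{\tilde N}^{1/2})$. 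Since $B_{\tilde N}-\tilde B_{\tilde N}=W_{\tilde N}^{-1}\bigl((\sum_{n\le \tilde N}(s_n-n^{-\gamma})F(n))-(W_{\tilde N}-\bar W_{\tilde N})\tilde B_{\tilde N}\bigr)$ and $\norm{\tilde B_{\tilde N}}_\infty\le 1$, everything reduces to the estimate
\[
\expect_\omega\Bigl\|\sum_{n\le \tilde N}(s_n(\omega)-n^{-\gamma})\,F(n)\Bigr\|_{L^2(X)}\ \le\ C\,\tilde N^{\,1-\gamma-\eps}
\]
for some $\eps\in(0,1-\gamma)$, which is the heart of the matter.

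To prove it I split the sum over the dyadic blocks $I_l=[2^l,2^{l+1})$, $l\lesssim\log \tilde N$. On $I_l$ the probabilities $n^{-\gamma}$ lie within a bounded factor of $p_l:=(2^l)^{-\gamma}$. The substitution $n=y-x$ turns $\sum_n a(n)\prod_i f_i(T^{in}x)$ into a sum in which $f_i\circ T^{in}$ is evaluated at $iy-(i-1)x$ and $a$ at $y-x$, so by the (multilinear) Calderón transference principle the $L^2(X)$ norm of the $l$-th block contribution is bounded by the $\ell^2\times(\ell^\infty)^{M-1}\!\to\ell^2$ operator norm of the corresponding operator on $\integers$. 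That operator is precisely $T_\omega$ of \eqref{Tomega} with $N\asymp 2^l$, with $M$ replaced by $M-1$, with $f=f_1$ and $g_j=f_{j+1}$, and with $L_0(x,y)=y-x$, $L_j(x,y)=(j+1)y-jx$ for $1\le j\le M-1$; a direct check shows none of these forms is a rational multiple of $(x,y)\mapsto x$ or of $(x,y)\mapsto y$ and no two are proportional. Hence Theorem~\ref{thm:main} applies — here $p_l\ge(2^l)^{-\gamma}$, and the hypothesis $0\le\gamma<2^{-(M-1)}=2^{-M+1}$ of the theorem we are proving is exactly what that application requires — once one notes that the conclusion of Theorem~\ref{thm:main}, and its proof, are unaffected by the selectors being supported on a subinterval of $[-N,N]$ and by their probabilities varying within a bounded factor; this is the one place where slightly more than a black-box appeal is needed, and it is routine. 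Tracking normalizations, the $l$-th block contributes $2^{l(1-\gamma)}\norm{T_\omega}_{\text{op}}$, with expectation $\le C\,2^{l(1-\gamma)}(2^l)^{-\eps}$; summing the geometric series over $l$ gives $O(\tilde N^{1-\gamma-\eps})$, as claimed.

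Combining the above, on a full-measure $\omega$-set one has $\expect_\omega\norm{B_{\tilde N}-\tilde B_{\tilde N}}_{L^2}\le C\tilde N^{-\eps'}$ with $\eps'=\min(\eps,(1-\gamma)/2)>0$. Along a geometric subsequence $\tilde N_m=\lfloor(1+\delta)^m\rfloor$ this is summable, so a.s.\ $\norm{B_{\tilde N_m}(\omega)-\tilde B_{\tilde N_m}}_{L^2}\to 0$; for $\tilde N\in[\tilde N_m,\tilde N_{m+1})$, $B_{\tilde N}-B_{\tilde N_m}$ is supported on $O(\delta\bar W_{\tilde N_m})$ extra selected indices (again by concentration), whence $\limsup_{\tilde N}\norm{B_{\tilde N}(\omega)-\tilde B_{\tilde N}}_{L^2}\le C'\delta$ a.s.; letting $\delta\to 0$ along a sequence and invoking $\tilde B_{\tilde N}\to A$ gives $B_{\tilde N}(\omega)\to A$ in $L^2(X)$ for a.e.\ $\omega$, which suffices. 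To obtain one null set valid for all $f_1,\dots,f_M\in L^\infty(X)$, run this for all tuples drawn from a fixed countable family that is $L^2$-dense in each ball $\{\norm{f}_\infty\le j\}$, and pass to arbitrary tuples using multilinearity and the uniform bound $\norm{B_{\tilde N}(\omega)(h_1,\dots,h_M)}_{L^1}\le\norm{h_{i_0}}_{L^2}\prod_{i\ne i_0}\norm{h_i}_\infty$ (and the same bound for $A$). The case $M=1$ is the classical random mean ergodic theorem and is handled separately. The crux is the dyadic-block analysis — the transference identification of the model operator with \eqref{Tomega}, the non-degeneracy of the forms $L_j$, and the insensitivity of Theorem~\ref{thm:main} to bounded variation of the selector probabilities across the window; summability of the $l$-sum only needs $\gamma<1$, while the constraint $\gamma<2^{-M+1}$ is inherited entirely from the application of Theorem~\ref{thm:main} to the $M-1$ functions $g_j$.
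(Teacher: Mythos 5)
Your proposal is correct and implements, with full detail, the paper's one-line deduction of Theorem~\ref{thm:multimeanconvergence} from \eqref{eq:RTknown} and Theorem~\ref{thm:main}: dyadic block decomposition, multilinear Calder\'on transference identifying the $l$-th block with $T_\omega$ of \eqref{Tomega} (with $M-1$ factors $g_j$, $N\asymp 2^l$, $p_l\asymp 2^{-l\gamma}$), Chebyshev/Borel--Cantelli along a geometric subsequence, and a countable-density argument to get a single null set. In particular you correctly trace the hypothesis $\gamma<2^{-M+1}$ to the application of Theorem~\ref{thm:main} with $M$ replaced by $M-1$, and you correctly flag (and it is indeed routine) the one point requiring more than a black-box appeal, namely that the proof of Theorem~\ref{thm:main} is stable under restricting the selectors to a dyadic window and allowing their probabilities to vary by a bounded factor.
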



A generalization of Theorem~\ref{thm:main} is natural, and of interest.
$e_\xi$ will denote the function $y\mapsto e^{-i\xi y}$.
With the above notations, define
\begin{align*}
T_{\omega}^*(f,g_1,\cdots,g_M)(x)
&= \sup_{\xi\in\torus} \Big|\sum_y e^{-i\xi y} f(y)r(\omega,L_0(x,y))\prod_{j=1}^M g_j(L_j(x,y))\Big|
\\
&= \sup_{\xi\in\torus} \big|T_{\omega}(e_{\xi} f,g_1,\cdots,g_M)(x)\big|.
\end{align*}
Multiplying each function $g_j(z)$ by a factor $e^{-i\xi_j z}$,
and taking the supremum over all $(\xi,\xi_1,\cdots,\xi_M)$,
introduces no additional generality since 
each $e^{-i\xi_j L_j(x,y)}$ can be factored as $e^{ia_j x\xi_j}e^{-ib_j y\xi_j}$
for appropriate coefficients $a_j,b_j$.

\begin{theorem} \label{thm:Carleson}
For each $0\le\gamma<2^{-M-1}$
there exist
$\delta>0$ and  $C<\infty$
such that for all $N\ge 1$ and $p\ge N^{-\gamma}$,
\begin{equation}
\expect_\omega \norm{T_{\omega}^*}_{\text{op}}
\le CN^{-\delta}.
\end{equation}
\end{theorem}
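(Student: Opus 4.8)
The plan is to adapt the proof of Theorem~\ref{thm:main}, carrying the supremum over modulations through the argument by discretizing it and absorbing it into the linear operator to which the moment method is applied. For the reductions: since $r(\omega,\cdot)$ is supported in $[-N,N]$ and $L_0(x,y)=a_0x+b_0y$ has $b_0\ne0$ (by hypothesis $L_0$ is not a rational multiple of $(x,y)\mapsto x$), for each fixed $x$ the $y$-sum defining $T_\omega(e_\xi f,g_1,\dots,g_M)(x)$ runs over an interval of length $O(N)$; hence $\xi\mapsto T_\omega(e_\xi f,g_1,\dots,g_M)(x)$ is a trigonometric polynomial of degree $O(N)$, and by Bernstein's inequality its supremum over $\torus$ is comparable to its maximum over a fixed arithmetic net $\Xi\subset\torus$ with $|\Xi|=O(N)$. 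Moreover the operator $f\mapsto T_\omega(f,\vec g)$ is ``banded'' at scale $N$ in both variables, so a bounded-overlap decomposition of $f$ into blocks of length $O(N)$ reduces matters to $f$ supported in a single such block; the relevant $x$'s then also fill a window of length $O(N)$, and each $g_j$ is seen only on a window of length $O(N)$. After a measurable selection $x\mapsto\xi(x)\in\Xi$ realizing the maximum, one is reduced to bounding $\expect_\omega\sup_{\vec g}\sup_{\xi(\cdot)}\norm{A_\omega^{\vec g,\xi(\cdot)}}_{\text{op}}$, where $A_\omega^{\vec g,\xi(\cdot)}$ is the operator (linear in $f$) with kernel $e^{-i\xi(x)y}\,r(\omega,L_0(x,y))\prod_{j=1}^M g_j(L_j(x,y))$, and the suprema are over $\|g_j\|_\infty\le1$ and over functions $\xi(\cdot)$ from the relevant window into $\Xi$.

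\textbf{Moment method.} For a large integer $k$ one uses $\norm{A}_{\text{op}}^{2k}\le\trace\big((A^*A)^k\big)$ and estimates the $\omega$-expectation of the right side. The two outer suprema are handled by nets and a union bound, exactly as the supremum over $\vec g$ is handled in the proof of Theorem~\ref{thm:main}: $\vec g$ ranges over the unit ball of a space of dimension $O(N)$ and $\xi(\cdot)$ over a set of cardinality $|\Xi|^{O(N)}$, so a net of controlled size together with routine Lipschitz estimates for $\vec g\mapsto\norm{A_\omega^{\vec g,\xi(\cdot)}}_{\text{op}}$ converts $\expect_\omega\sup$ into $\sup\,\expect_\omega$ at the price of an $\omega$-independent overhead, which forces $k$ to be chosen suitably large (as in Theorem~\ref{thm:main}) but is otherwise harmless. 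Expanding the trace, $\expect_\omega\trace\big((A^*A)^k\big)$ becomes a sum over configurations $(x_1,\dots,x_k,y_1,\dots,y_k)$ of a product of modulation phases $\prod_i e^{i\xi(x_i)(y_i-y_{i+1})}$, of factors $\prod_{i,j}\overline{g_j(L_j(x_i,y_i))}\,g_j(L_j(x_i,y_{i+1}))$ of modulus $\le1$, and of $\expect_\omega\prod_i\overline{r(\omega,L_0(x_i,y_i))}\,r(\omega,L_0(x_i,y_{i+1}))$; because the $r$ variables are independent and mean zero, the last factor vanishes unless the $2k$ arguments fall into groups of size $\ge2$, the dominant case being a perfect matching, in which each matched pair contributes a factor $\asymp\expect_\omega r(\omega,0)^2\asymp N^{-2}p^{-1}$.

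\textbf{The count, and conclusion.} One bounds the $g_j$-factors by $1$ (legitimate after taking $\expect_\omega$, since the $g_j$ are deterministic) and, for each matching, estimates the sum over the surviving configurations weighted by the modulation phases. The new feature relative to Theorem~\ref{thm:main} is that these phases cancel unless the $y$-variables satisfy additional equalities dictated by the matching and by $\Xi$; together with the nondegeneracy hypotheses on the $L_j$ (no $L_j$ a rational multiple of $(x,y)\mapsto x$, of $(x,y)\mapsto y$, or of another $L_i$), this bounds the number of admissible configurations, but with an exponent larger than in Theorem~\ref{thm:main} by essentially a factor $2$, so that the whole sum is at most $N^{O(1)}\cdot\big(CN^{\beta}\big)^{k}$ with $\beta<0$ precisely when $\gamma<2^{-M-1}$. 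Taking $k$ appropriately and then $2k$-th roots yields $\expect_\omega\norm{T_\omega^*}_{\text{op}}\le CN^{-\delta}$.

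\textbf{Main obstacle.} The hard part is exactly this counting step, and in particular the bookkeeping that identifies the loss coming from the supremum over modulations as exactly a factor $2$ in the exponent: one must track simultaneously the constraints imposed by the $r$-matching and those imposed by cancellation of the modulation phases, and confirm that their interaction degrades the admissible range of $\gamma$ by this factor and no more. A secondary, technical point is calibrating the net for $(\vec g,\xi(\cdot))$ against the size of $k$ so that the union-bound overhead is absorbed.
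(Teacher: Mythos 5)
Your proposal departs from the paper's proof at the central step, and the departure creates a real gap. The paper never takes a union bound over a net for $\vec g$ or for $\xi(\cdot)$. Its treatment of the maximal operator is a one-move reduction: after a measurable linearizing selection $x\mapsto\xi(x,z)$, squaring the $L^2$ norm produces, as in \eqref{byephase}, a new difference variable $w$ and a phase $e^{-i\xi(x,z)w}$ which, for each fixed $w$, is a \emph{bounded function of $x$ alone}. Setting $g_{0,w}(x)=e^{-i\xi(x,z)w}$ with $\|g_{0,w}\|_\infty=1$, one recognizes the resulting expression as a multilinear form with $M+1$ bounded factors $g_0,\dots,g_M$ and a two-fold product $\rho$, to which Theorem~\ref{thm:productofr} applies directly. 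The supremum over modulations is thus absorbed at the cost of exactly one additional degree of multilinearity, which is precisely one extra power of $2$ in the admissible range of $\gamma$; the factor-of-two loss you correctly anticipate is built in transparently, rather than emerging from a combinatorial count.

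Your version has two concrete problems. First, you state that the supremum over $\vec g$ in Theorem~\ref{thm:main} ``is handled by nets and a union bound''; it is not. In the paper the $g_j$'s are eliminated one at a time, deterministically, by the iterated Cauchy--Schwarz of Lemma~\ref{lemma:CS}; the only union bound occurs in Lemma~\ref{lemma:secondbigq}, over \emph{polynomially} many values of $z$ and $\xi$. Second, a union bound over a net for $(\vec g,\xi(\cdot))$ would have cardinality $\exp(cN)\cdot N^{O(N)}$, and a trace moment of order $k$ gives per-event tails of size $N^{-ck}$ only up to a combinatorial constant that grows at least like $(2k)!!\asymp k^{ck}$; balancing these forces $k\gtrsim N/\log N$, at which point the overhead is itself $\exp(cN)$ and it is far from clear the argument closes. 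Moreover your proposal is internally inconsistent at exactly this point: you bound the $g_j$-factors by $1$ after taking $\expect_\omega$, which makes the resulting trace estimate uniform in $\vec g$ and renders the $\vec g$-net useless, while the real obstacle --- interchanging $\expect_\omega$ with $\sup_{\vec g}$ on the operator norm --- remains unaddressed. To make the moment method contend with $\sup_{\vec g}$ you would in effect have to reintroduce the Cauchy--Schwarz reduction, and once you do, absorbing the phase as $g_0$ as the paper does is both simpler and already gives the stated range $\gamma<2^{-M-1}$.
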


The case $M=0$ has an ergodic-theoretic consequence,
for return times of sparse random subsequences.
\begin{theorem}[Return Times] \label{thm:returntimes}
Let $(X,\scripta,\mu,\tau)$ be any dynamical system,
such that $\mu$ is a probability measure
and $(X,\scripta,\mu)$ is isomorphic to $[0,1]$
equipped with Lebesgue measure and the Lebesgue $\sigma$-algebra.
Let $0\le\gamma<\tfrac12$.
Let $\{n_k(\omega)\}$ be a random sequence, constructed as in Theorem~\ref{thm:multimeanconvergence}.
Let $p\in(1,\infty]$ and $q\ge 2$.
Then for almost every $\omega\in\Omega$, the following holds.
For each $f\in L^p(X)$
there exists a subset $X_0\subset X$ of full measure such that
for every dynamical system $(Y,\scriptf,\nu,\sigma)$,
every $g\in L^q(Y)$, and every $x\in X_0$,
\begin{equation*}
\lim_{N\to\infty}N^{-1}\sum_{k=1}^N f(\tau^{n_k(\omega)}(x))g(\sigma^{n_k(\omega)}(y))
\text{ exist for  $\nu$-almost every $y\in Y$.}
\end{equation*}
\end{theorem}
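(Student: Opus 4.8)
The plan is to follow the familiar architecture of return-times theorems: reduce to a weighted ergodic average over $\naturals$, peel off a deterministic-weight part that is handled by the classical Return Times Theorem, and dispose of the random fluctuation by invoking Theorem~\ref{thm:Carleson} in the case $M=0$. First I would carry out standard reductions. Because $\sum_{n\le m}s_n(\omega)\sim m^{1-\gamma}/(1-\gamma)$ for a.e.\ $\omega$ (Chebyshev and Borel--Cantelli over dyadic $m$, valid since $\gamma<1$), because $n_k(\omega)\to\infty$, and because passing from $m=n_N(\omega)$ to intermediate $m$ and adjoining a single summand changes a normalized sum only by a quantity tending to $0$ for $\nu$-a.e.\ $y$ (here $q>1$ is used), it suffices to show that
\[
A_m(x,y)\;:=\;\frac{1-\gamma}{m^{1-\gamma}}\sum_{n=1}^{m}s_n(\omega)\,f(\tau^n x)\,g(\sigma^n y)
\]
converges as $m\to\infty$ for $\nu$-a.e.\ $y$. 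A random maximal ergodic inequality on $(X,\tau,\mu)$, available for $\gamma<1$, together with a density argument, reduces matters to $f\in L^\infty(X)$ (here $p>1$ enters). Splitting $s_n=n^{-\gamma}+(s_n-n^{-\gamma})$ writes $A_m=A_m^{\mathrm d}+A_m^{\mathrm r}$, a deterministic-weight part and a mean-zero random part.

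For $A_m^{\mathrm d}$ I would invoke Bourgain's Return Times Theorem (and the Bourgain--Furstenberg--Katznelson--Ornstein version) in the form $f\in L^\infty$, $g\in L^q$: there is a full-measure set $X_0'\subset X$, depending only on $f$ and $(X,\tau,\mu)$, such that for every $(Y,\sigma,\nu)$, every $g\in L^q(Y)$ and every $x\in X_0'$, the Ces\`aro averages $m^{-1}\sum_{n\le m}f(\tau^n x)g(\sigma^n y)$ converge for $\nu$-a.e.\ $y$, to $L(x,y)$ say, whence Abel summation (Kronecker's lemma) gives $A_m^{\mathrm d}(x,y)\to L(x,y)$ for the same $x$ and $\nu$-a.e.\ $y$. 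The substance is $A_m^{\mathrm r}$, and this is where Theorem~\ref{thm:Carleson} enters. By the spectral theorem for the Koopman operator of $\sigma$, for fixed $\omega$, $x$ and every finite $B\subset\naturals$,
\[
\Big\|\sum_{n\in B}(s_n-n^{-\gamma})f(\tau^n x)\,g(\sigma^n\cdot)\Big\|_{L^2(Y)}^{2}
=\int_{\torus}\Big|\sum_{n\in B}(s_n-n^{-\gamma})f(\tau^n x)\,e^{in\theta}\Big|^{2}\,d\sigma_g(\theta),
\]
$\sigma_g$ the spectral measure of $g$; hence everything relevant is controlled, simultaneously for all target systems $(Y,\sigma,\nu)$ and all $g$ with $\norm{g}_2\le 1$, by the frequency-uniform weighted exponential sums $\sup_{\theta\in\torus}\big|\sum_{n\in B}(s_n-n^{-\gamma})f(\tau^n x)e^{in\theta}\big|$.

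Decomposing $n\le m$ into dyadic blocks $B_j=[2^j,2^{j+1})$, on $B_j$ the fluctuation kernel $2^{-j(1-\gamma)}(s_n-n^{-\gamma})$ is, up to bounded factors (after replacing the slowly varying $n^{-\gamma}$ by $2^{-j\gamma}$), the kernel $r(\omega,\cdot)$ of Theorem~\ref{thm:Carleson} at scale $N\asymp2^j$ with sparsity $p\asymp2^{-j\gamma}=N^{-\gamma}$; the hypothesis $\gamma<\tfrac12=2^{-M-1}$, $M=0$, is precisely that theorem's range. Summing $\expect_\omega\norm{T_\omega^*}_{\text{op}}\le C2^{-\delta j}$ over dyadic scales and applying Borel--Cantelli gives, for a.e.\ $\omega$, a frequency-uniform bound on each block with summable-in-$j$ constant; to reinstate the weight $f(\tau^n x)$ one re-runs the concentration estimate underlying Theorem~\ref{thm:Carleson} on the weighted kernel, using that $|f(\tau^n x)|\le\norm f_\infty$ and, by Birkhoff's theorem, $\sum_{n\in B_j}|f(\tau^n x)|^{2}\lesssim 2^j$ for a.e.\ $x$, together with a net in $\theta$ and a Fubini argument placing the a.e.-$x$ set outside. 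The upshot is that for a.e.\ $\omega$, and every $x$ in a full-measure set $X_0''$ depending only on $\omega$, $f$ and $(X,\tau,\mu)$, one has $\sup_{\theta}\big|\sum_{n\in B_j}(s_n-n^{-\gamma})f(\tau^n x)e^{in\theta}\big|\le c_j\,2^{j(1-\gamma)}$ with $\sum_j c_j<\infty$. Feeding these blockwise bounds into the spectral identity and transferring by the Calder\'on principle produces, for such $\omega$, $x$ and simultaneously for all $(Y,\sigma,\nu)$ and $g\in L^q$ (using $q\ge2$, so $g\in L^2(Y)$), the $L^q(Y)$ maximal inequality for $\sup_m|A_m^{\mathrm r}|$ together with the convergence $A_m^{\mathrm r}(x,\cdot)\to0$ in $L^2(Y)$, hence for $\nu$-a.e.\ $y$.

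Combining the parts on $X_0:=X_0'\cap X_0''$ gives $A_m(x,y)\to L(x,y)$ for a.e.\ $\omega$, every $x\in X_0$, every $(Y,\sigma,\nu)$ and $g\in L^q$, and $\nu$-a.e.\ $y$; undoing the reductions of the first paragraph yields the theorem. The main obstacle is the universality of $X_0$ in the fluctuation part — a single full-measure subset of $X$ valid for \emph{every} target dynamical system and \emph{every} $g\in L^q$ at once, the classical difficulty of return-times theorems. It is resolved by the observation that, through the spectral representation, universality amounts to controlling the weighted exponential sums uniformly over the frequency parameter $\theta\in\torus$, which is precisely the frequency-uniform (Carleson-type) content of Theorem~\ref{thm:Carleson} with $M=0$; this is also why the range of $\gamma$ is limited to $\gamma<\tfrac12$, the weighting by the bounded sequence $f(\tau^n x)$ being absorbed via Birkhoff's theorem and the same concentration estimates. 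Two subsidiary technical points — the random maximal ergodic inequality used to truncate $f$ in $L^p$ when $p$ is near $1$, and the comparison of the varying selector probabilities $n^{-\gamma}$ on each block with a constant so that Theorem~\ref{thm:Carleson} applies verbatim — are routine but must be carried out.
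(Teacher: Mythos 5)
Your plan follows the same route as the paper's sketch: compare the random block average to the full Ces\`aro average (your split $s_n = n^{-\gamma} + (s_n - n^{-\gamma})$ is the same comparison), handle the deterministic part by the classical Return Times Theorem, convert the random fluctuation --- via the spectral theorem, so as to be simultaneous over all targets $(Y,\sigma,\nu)$ and $g$ --- into frequency-uniform exponential sums, and invoke Theorem~\ref{thm:Carleson} with $M=0$ (which is exactly where $\gamma<\tfrac12$ enters) together with Borel--Cantelli over dyadic blocks. The one detour is inessential: ``re-running the concentration estimate'' and invoking Birkhoff to absorb the weight $f(\tau^n x)$ is not needed, since Theorem~\ref{thm:Carleson} already bounds $\sup_\xi\big|\sum_y e^{-i\xi y}f(y)\,r(\omega,L_0(x,y))\big|$ in $\ell^2$ operator norm for \emph{arbitrary} $f$ --- the transferred $f$ is precisely your weight --- so the operator-norm bound, the Calder\'on transfer principle, and summability of the block constants give the a.e.-$x$ control directly, without revisiting the large-deviations argument inside the theorem.
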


Thus far we have considered random variables which depend only on $L(x,y)$
for some linear function $L$. Next, we consider analogous results
for random matrices $\begin{pmatrix} r_\omega(x,y)\end{pmatrix}_{x,y}$, with all entries mutually independent.
Consider jointly independent random selector variables $s_\omega(x,y)$
for $(x,y)\in[-N,\cdots,N]^2$, satisfying $s_\omega(x,y)=1$ with probability
$p$, and $=0$ otherwise.
Then $\expect(\sum_x s_\omega(x,y))\asymp Np$
and $\expect(\sum_y s_\omega(x,y))\asymp Np$.
Define $r_\omega(x,y)=(Np)^{-1}\big(s_\omega(x,y)-p)$
so that
$\expect_\omega r_\omega(x,y) =0$.

\begin{theorem} \label{thm:morerandom}
Let $M\ge 2$ and $0\le\gamma<1$.
For any $\{L_j: 0\le j\le M\}$ satisfying
the hypotheses of Theorem~\ref{thm:main}
and for any $\eps>0$ there exists $C_{M,\eps}<\infty$
such that for all $N\ge 1$
and all $p\ge N^{-\gamma}$,
the multilinear forms 
\[
\scriptt_\omega(f_1,\cdots,f_M) = \sum_{x,y} r_\omega(x,y)\prod_{j=1}^M f_j(L_j(x,y))
\]
satisfy
\begin{equation}
\expect_\omega\norm{\scriptt_\omega}_{\text{op}}
\le C_{M,\eps}N^\eps N^{-(1-\gamma)/2}.
\end{equation}
\end{theorem}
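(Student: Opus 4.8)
The plan is to reduce the bound to an estimate for the $\ell^2\to\ell^2$ operator norm of a family of random matrices, and then to run the moment method. First I would exploit the hypotheses on the $L_j$. Since $L_1$ and $L_2$ are not proportional and neither is proportional to $(x,y)\mapsto x$ or to $(x,y)\mapsto y$, the map $(x,y)\mapsto(u_1,u_2)=(L_1(x,y),L_2(x,y))$ is an injective $\integers$-linear map $\integers^2\to\integers^2$ with nonzero determinant, and for each $j\ge 3$ the quantity $L_j(x,y)$ equals a fixed linear form $\phi_j(u_1,u_2)$ which is proportional to neither $u_1$ nor $u_2$. Relabelling, set $A_\omega(u_1,u_2)=r_\omega(x,y)$ when $(u_1,u_2)$ lies in the image of this map and $A_\omega(u_1,u_2)=0$ otherwise; the nonzero entries are jointly independent, have mean zero, are bounded by $(Np)^{-1}$, have variance $\asymp N^{-2}p^{-1}$, and every row and column of $A_\omega$ contains $O(N)$ of them. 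Taking the operator norm with $f_1,f_2$ in $\ell^2$ and $f_3,\dots,f_M$ in $\ell^\infty$ (so that the natural scaling $\sum_j p_j^{-1}=1$ holds), one obtains
\[
\norm{\scriptt_\omega}_{\text{op}}
=\sup_{\substack{\norm{f_j}_\infty\le 1\\ 3\le j\le M}}\ \bigl\|A_\omega\odot W_{f_3,\dots,f_M}\bigr\|_{\ell^2\to\ell^2},
\qquad W_{f_3,\dots,f_M}(u_1,u_2)=\prod_{j=3}^M f_j\bigl(\phi_j(u_1,u_2)\bigr),
\]
where $\odot$ is the entrywise (Schur) product and $|W_{f_3,\dots,f_M}|\le 1$ entrywise (the same scheme applies for any other assignment of exponents with $\sum_j p_j^{-1}=1$).

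When $M=2$ there is no mask, and one estimates $\expect_\omega\trace\bigl((A_\omega A_\omega^*)^k\bigr)$ for a fixed $k\asymp 1/\eps$. Because $\expect_\omega r_\omega=0$, only closed walks in which every edge is traversed at least twice contribute; the sparsity ($O(N)$ entries per row and column) together with the usual count of tree-like walks gives $\expect_\omega\trace\bigl((A_\omega A_\omega^*)^k\bigr)\le C^k N\,(N^{-1}p^{-1})^k$, so that, since $\norm{A_\omega}_{\ell^2\to\ell^2}^{2k}\le\trace\bigl((A_\omega A_\omega^*)^k\bigr)$, one gets $\expect_\omega\norm{A_\omega}_{\ell^2\to\ell^2}\le \bigl(\expect_\omega\norm{A_\omega}^{2k}\bigr)^{1/(2k)}\le C\,N^{\eps}(N^{-1}p^{-1})^{1/2}\le C\,N^{\eps}N^{-(1-\gamma)/2}$, using $p\ge N^{-\gamma}$. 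For $M\ge 3$ the difficulty is the supremum over the masks, since $f_3,\dots,f_M$ range over balls in spaces of dimension $\asymp N$ whose metric entropy greatly exceeds the exponent in the best per-function concentration estimate. The plan here is: dyadically decompose each $f_j$ ($j\ge 3$) according to the size of its level sets, so that up to a factor $N^{\eps}$ (absorbing the $O(\log N)$ many scales) one may assume $f_j=\one_{S_j}$ for finite sets $S_j$; then $W$ is the indicator of the pullback of a product set, so $A_\omega\odot W$ is a relabelling of the restriction of $A_\omega$ to a rectangle whose side lengths are controlled by $|S_3|,\dots,|S_M|$ — again a random matrix with independent, mean-zero entries, now of controlled dimensions and row/column sparsities. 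One of the two $\ell^2$ functions is removed by Cauchy--Schwarz without discretization, and a net argument is run over the remaining $\ell^2$ function and over the supports $S_j$, the net cardinality now being governed by the $|S_j|$ rather than by $N$.

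The step I expect to be the main obstacle is balancing the concentration estimate against the entropy of the net in every regime of the parameters $|S_j|$ and $p$. When the supports are large relative to appropriate powers of $N^{\gamma}$, Bernstein's inequality — and, in the intermediate range, crucially the logarithmic gain in its subexponential (Poisson) tail — is strong enough to survive the union bound; when the supports are small, $\scriptt_\omega$ itself has few nonzero summands, so the deterministic estimate $|\scriptt_\omega(f_1,\dots,f_M)|\le (Np)^{-1}\sum_{x,y}|r_\omega(x,y)|\,|W(\ldots)|$ already beats $N^{-(1-\gamma)/2}$. Showing that these two regimes overlap for every $0\le\gamma<1$ is the technical heart of the argument; it is here that the full mutual independence of the entries $r_\omega(x,y)$ — rather than only of the values $r(\omega,L_0(x,y))$ as in Theorem~\ref{thm:main} — is essential, and that the restriction $\gamma<1$, as opposed to a $2^{-M}$-type threshold, is what one needs. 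Finally one takes the expectation: off an event of super-polynomially small probability one has $\norm{\scriptt_\omega}_{\text{op}}\le C_{M,\eps}N^{\eps}N^{-(1-\gamma)/2}$, while on that event the trivial deterministic bound $\norm{\scriptt_\omega}_{\text{op}}\le\mathrm{poly}(N)$ contributes negligibly to $\expect_\omega\norm{\scriptt_\omega}_{\text{op}}$, which yields the stated inequality.
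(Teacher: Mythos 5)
Your high-level strategy matches the paper's: reduce to indicator functions, apply a large-deviation (Chernoff) bound, union-bound over the entropy of the class of indicators, and handle the small-support regime by a deterministic estimate; and your $M=2$ base case via $\expect\trace((A_\omega A_\omega^*)^k)$ and the counting of closed walks/partitions is exactly the paper's Lemma~\ref{lemma:morerandombasecase} and Lemma~\ref{lemma:doti}. However, several of the technical moves differ in ways that matter, and in the places where you flag ``the technical heart'' the paper takes a cleaner route that you have not quite located.

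First, the paper does not keep either $\ell^2$ argument undiscretized. It reduces \emph{all} of $f_1,\dots,f_M$ to indicators at once by restricted weak type interpolation, passing to the quantity $\norm{\scriptt}_{\text{weak}}=\sup_{E_1,\dots,E_M}|E_1|^{-1/2}|E_2|^{-1/2}|\scriptt(E_1,\dots,E_M)|$. Your plan of ``Cauchy--Schwarz without discretization'' on one $\ell^2$ slot followed by a net over the other $\ell^2$ slot and over the supports $S_j$ is an extra layer that the interpolation avoids; moreover, once you have Cauchy--Schwarzed one variable away you are looking at a diagonal-free quadratic form in the random entries, and controlling a supremum of \emph{that} over $\ell^2$ nets is genuinely harder than controlling the linear statistic the paper works with. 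Second, the key entropy observation you are missing is that after the reduction to indicators the random sum is $\sum_{(x,y)\in\scripte}r_\omega(x,y)$ where $\scripte=\{(x,y):L_j(x,y)\in E_j\ \forall j\}$, and $\scripte$ is determined by the product $E_1\times\cdots\times E_M$; hence the union bound runs over at most $2^{CMN}$ sets $\scripte$, even though a priori $\scripte\subset[-N,N]^2$ could be any of $2^{CN^2}$ sets. Against that entropy, the Chernoff tail $e^{-c\min(\lambda^2,\lambda\sigma)}$ is made super-polynomially small by first arranging, via the paper's Lemma~\ref{lemma:split}, that one may assume $|\scripte|\gtrsim|E_1|\,|E_2|\gtrsim N^{2-\eta}$. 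That lemma is proved by an induction on $M$: one compares $\scriptt_\omega$ with the nonrandom averaging form $\scripta$, uses the trivial improvement \eqref{eq:improvement} when $|E_1|\,|E_2|$ is small, invokes the case $M-1$ by setting $E_M=[-N,N]$, and then iteratively replaces each $E_j$ by its complement if necessary to force $|\scripte|\ge 2^{-M}|E_1|\,|E_2|$. Your ``when the supports are small the deterministic estimate already beats $N^{-(1-\gamma)/2}$'' is the right instinct but is not enough by itself; it is precisely this inductive comparison with the $(M-1)$-linear form that lets the paper dispatch the intermediate regime cleanly for the full range $\gamma<1$. Finally, the exceptional-set contribution is controlled by Lemma~\ref{lemma:logloss} (a $\log^2 N$ deterministic-in-expectation bound on $\norm{\scriptt_\omega}_{\text{op}}$), not just a crude $\mathrm{poly}(N)$ bound, though this is a minor point since the bad event has probability $e^{-cN^{1+\delta}}$.

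So: correct strategy, correct base case, but the discretization step, the $\scripte$-determined-by-$\prod E_j$ entropy count, and the inductive split lemma are the concrete ideas that close the gaps you identified as open, and your proposed net argument over $\ell^2$ functions is a detour the paper avoids.
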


In this formulation, $\scriptt_\omega(f_1,\cdots,f_M)$
is a complex number, not a function.
It is possible to generalize Theorem~\ref{thm:morerandom} by incorporating factors
$e^{-i\xi y}$, with a supremum over all $\xi$,
parallel to Theorem~\ref{thm:Carleson}.

The conclusion of Theorem~\ref{thm:morerandom} fails to hold for $\gamma>1$.
The method of proof of Theorem~\ref{thm:main} applies only
in the restricted range $\gamma< 2^{-(M-2)}$,
and with some added complications since
the Fourier transform cannot be applied directly.
However, our proof for the full range $\gamma<1$ proceeds along quite different lines,
relying on entropy considerations along with large deviations bounds.

\section{A Preliminary Bound}

The order of quantifiers in Theorem~\ref{thm:main} is significant.
In this preliminary section we discuss a variant in which the supremum 
in the definition \eqref{Tomega} of $T_\omega$
is taken only over $f$, with $g_1,\cdots,g_M$ fixed.
For this variant, and even for a substantial generalization, 
more complete results can be obtained, by a simpler method.

Generalize $T_\omega$ by considering linear operators 
\begin{equation}
L_{\omega,h}(f)(x)=\sum_y f(y) r(\omega,x-y)h(x,y),
\end{equation}
where $h\in \ell^\infty(\integers^2)$ is an arbitrary bounded
function of two variables.
In particular, this includes the case where
$h(x,y)=\prod_{j=1}^M g_j(L_j(x,y))$, for
arbitrary $M$ and  $g_j\in\ell^\infty$. 

Let $\Omega$, $p$, $N$, $s(\omega,\cdot)$, $r(\omega,x)$
be as in Theorem~\ref{thm:main}.
Regard $L_{\omega,h}$ as a linear opertor on $\lt([-N,N])$.
\begin{proposition} \label{prop:easier}
For any $\eps>0$,
there exists $C_\eps<\infty$ such that
for every $h\in\ell^\infty$,
\begin{equation}
\expect_\omega
\norm{L_{\omega,h}}_{\text{op}}
\le
C_\eps
N^\eps (Np)^{-1/2}
\norm{h}_{\ell^\infty}.
\end{equation}
\end{proposition}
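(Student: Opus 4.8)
The plan is to estimate $\expect_\omega\norm{L_{\omega,h}}_{\text{op}}$ via a moment method, bounding a high power $\expect_\omega\norm{L_{\omega,h}}_{\text{op}}^{2k}$ and then taking $2k$-th roots with $k$ chosen proportional to $\log N$ (which is what produces the harmless $N^\eps$ factor). Since $L_{\omega,h}$ acts on $\lt([-N,N])$, a finite-dimensional space, I would trade the operator norm for something more tractable. The cleanest route: bound $\norm{L_{\omega,h}}_{\text{op}}^{2k} \le \trace\big((L_{\omega,h}^* L_{\omega,h})^k\big)$, expand the trace as a sum over closed walks of length $2k$ in the index set $[-N,N]$, and take expectations. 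Each factor $r(\omega,x-y)$ contributes a mean-zero random variable that is $O((Np)^{-1})$ in sup norm with variance $O((Np)^{-2}\cdot p) = O(N^{-2}p^{-1})$, and the $h$-factors contribute at most $\norm{h}_{\ell^\infty}^{2k}$ in absolute value. The independence structure is the key simplification here: $r(\omega,\cdot)$ depends only on a single scalar $x-y$, and the variables $\{s(\omega,m)\}_{m\in\integers}$ are jointly independent, so in the walk expansion a term survives only if every value $x_i - y_i$ that appears is repeated (appears at least twice among the $2k$ steps).

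The combinatorial heart is then counting closed walks $x_0, x_1, \dots, x_{2k} = x_0$ (alternately applying $L_{\omega,h}$ and its adjoint) in which the consecutive differences $d_i = x_{i-1} - x_i$ each occur with multiplicity $\ge 2$. First I would bound the expected contribution of a walk whose differences realize a fixed partition into groups of equal value: if there are $\ell$ distinct difference values, each with multiplicity $\ge 2$, the expectation is at most a product of moments of $r$, namely $\prod (\text{$j$-th moment})$, which is $O\big((N^{-2}p^{-1})^{\ell}\cdot (Np)^{-(2k-2\ell)}\big)$ using $|r|\le (Np)^{-1}$ for the remaining factors; the dominant case is all multiplicities exactly $2$, giving $(N^{-2}p^{-1})^{k}$. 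Second, count the walks: a closed walk is determined by its starting vertex ($\le 2N+1$ choices) and its sequence of differences; once we fix which steps carry equal differences (a pairing-type structure on $2k$ slots, $O(C^k k^k)$ choices, or better $O((2k)!/(2^k k!)) = O(k^k)$ for perfect matchings) and the $k$ distinct values themselves are constrained by the closure condition $\sum \pm d_i = 0$ — so only $k-1$ of them are free, each ranging over $O(N)$ values. Multiplying: $\expect \norm{L_{\omega,h}}_{\text{op}}^{2k} \lesssim N \cdot k^{O(k)} \cdot N^{k-1} \cdot (N^{-2}p^{-1})^{k}\cdot \norm{h}_{\ell^\infty}^{2k} = k^{O(k)} N^{-k}p^{-k}\norm{h}_{\ell^\infty}^{2k}$, roughly $k^{O(k)}(Np)^{-k}\norm{h}_{\ell^\infty}^{2k}$ after absorbing one power of $N$. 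Taking $2k$-th roots yields $\expect\norm{L_{\omega,h}}_{\text{op}} \lesssim k^{O(1)} (Np)^{-1/2}\norm{h}_{\ell^\infty}$, and choosing $k \asymp \eps \log N$ converts the $k^{O(1)}$ into $N^\eps$, up to constants depending on $\eps$. (One should also invoke Jensen to pass from $\expect\norm{\cdot}$ to $(\expect\norm{\cdot}^{2k})^{1/2k}$.)

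The main obstacle I expect is the bookkeeping in the walk count — specifically, verifying that the closure constraint genuinely removes one degree of freedom from the choice of distinct difference values, and that the combinatorial factor counting which steps share a value is only $k^{O(k)}$ rather than something like $N^{ck}$. One has to be careful that differences can repeat with signs (a step and a ``reversed'' step can both have difference value $d$ in the alternating product $L^*L$), but this only multiplies the count by $2^{O(k)}$, which is absorbed. A secondary point: since $h$ is arbitrary and not assumed to have any cancellation, one genuinely cannot do better than pulling it out in absolute value, so all the gain must come from the randomness of $r$ — which is exactly why the answer is governed by the variance scale $(Np)^{-1}$ and not by $N^{-1}$ (the sup-norm scale of $r$). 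The non-optimality hinted at elsewhere in the paper does not affect this proposition: here $(Np)^{-1/2}$ is the honest answer, and the only slack is the $N^\eps$, which is the standard price of a moment method and could presumably be removed with a more refined (e.g. Bernstein/Rosenthal-type) estimate if desired.
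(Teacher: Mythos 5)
Your proposal is the same trace/moment-method argument the paper uses: bound $\norm{L_{\omega,h}}_{\text{op}}^{2q}\le\trace\big((L_{\omega,h}^*L_{\omega,h})^q\big)$, expand the trace as a sum over closed walks, use independence to restrict to walks in which every difference value occurs at least twice, count such walks by the number $K$ of distinct differences, bound the expected product via moments of $r$, and take a $2q$-th root with $q\gtrsim 1/\eps$ to produce the $N^\eps$ factor. Two small corrections are worth making. First, your claim that the closure constraint $\sum\pm d_i=0$ always removes one degree of freedom from the $K$ free difference values is not correct in general: each $d$-value appears at least twice in the alternating sum with signs $\pm 1$, and if the two occurrences carry opposite signs (which happens, e.g.\ when consecutive steps share the same value), that value drops out of the constraint entirely; if all values drop out the constraint is $0=0$ and costs nothing. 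The paper avoids the issue by simply not using the closure constraint: the count is $\le C_q N^{K}$ for the differences times an extra $N$ for the starting vertex, giving $C_q N^{K+1}$. Second, the resulting $N^{1/2q}$ loss is exactly what the $N^\eps$ in the statement is for, and it is handled by choosing $q$ to be a \emph{fixed} constant of size $\gtrsim 1/\eps$; taking $q\asymp\log N$ is not needed (and would make the $q$-dependent combinatorial constant $C_q^{1/2q}$ harder, not easier, to control). With those adjustments your proof becomes essentially the paper's.
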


\begin{proof}
Fix $N$.
Denote by $\trace$ the trace of a self-adjoint linear operator
on $\ell^2([-N,N])$.
Fix $h$, and write $L_\omega=L_{\omega,h}$.
Since 
\[\expect_\omega\norm{L_\omega}_{\text{op}}
\le \big(\expect_\omega\norm{L_\omega}_{\text{op}} \big)^{1/2q}
\le \big(\expect_\omega\trace[(L_\omega^*L_\omega)^{q}] \big)^{1/2q},
\]
it therefore suffices to show that for any positive integer $q$,
\begin{equation} \label{traceclaim}
\expect_\omega \trace (L_\omega^*L_\omega)^q
\le C_q N\cdot (Np)^{-q} \norm{h}_\infty^{2q}.
\end{equation}

Write $\vec{n}=(n_1,\cdots,n_{2q})$
where $n_j\in[-N,N]$ are arbitrary.
Define $n_{2q+1}\equiv n_1$.
All sums over $\vec{n}$ written below are understood to be
taken over all such vectors $\vec{n}\in[-N,N]^{2q}$.
We say that $\vec{n}$ is admissible
if in the vector 
\[\vec{m}=(n_2-n_1,n_2-n_3,n_4-n_3,n_4-n_5,n_6-n_5,n_6-n_7,\cdots n_{2q}-n_{2q+1}),\]
no integer appears as a coordinate exactly once.
We write $\sum_{\vec{n}}^\dagger$ to denote the sum over all admissible $\vec{n}$.

With this notation, the trace can be expanded in the form
\[
\trace (L_\omega^*L_\omega)^q
=
\sum_{\vec{n}}
H(\vec{n})
\prod_{i=1}^{2q} r^*(\omega,n_{i+1}-n_i)
\]
where $r^*(\omega,n_{i+1}-n_i)
=r(\omega,n_{i+1}-n_i)$ if 
$i$ is odd, and
$=r(\omega,n_{i}-n_{i+1})$ if $i$ is even.
Here $H(\vec{n})$ is a product of $2q$ factors of $h$,
so $\norm{H}_{\ell^\infty}\le\norm{h}_\infty^{2q}$.
Moreover,
\begin{equation} \label{expectedtrace}
\expect_\omega
\trace (L_\omega^*L_\omega)^q
=
\textstyle\sum_{\vec{n}}^\dagger
H(\vec{n})
\expect_\omega 
\prod_{i=1}^{2q} r^*(\omega,n_{i+1}-n_i),
\end{equation}
since 
\[\expect_\omega
\prod_{i=1}^{2q} r^*(\omega,n_{i+1}-n_i)=0
\]
by independence whenever $\vec{n}$ is not admissible.

If $\vec{n}$ is admissible,
then the number $K$ of pairwise distinct coordinates of $\vec{m}(n)=(n_2-n_1,n_2-n_3,\cdots)$
satisfies $K\le q$.
Fix any $K\in[1,q]$. The number of $\vec{m}=(m_1,\cdots,m_{2q})\in[-2N,2N]^{2q}$
having exactly $K$ pairwise distinct coordinates
is $\le C_q N^K$.
The number of such $\vec{m}$ possessing the additional property that 
$m_1-m_2+m_3-m_4+\cdots=0$ is of course no greater.
The number of $\vec{n}\in[-N,N]$ for which $\vec{m}(n)$ has exactly $K$
distinct coordinates is therefore $\le C_q N^{K+1}$; one additional power of
$N$ arises, because $\vec{n}$ is determined by $\vec{m}(n)$ together with $n_1$,
though not by $\vec{m}(n)$ alone.

If $\vec{m}(n)$ has $K$ pairwise distinct coordinates,
then 
\[
\expect_\omega 
\prod_{i=1}^{2q} |r^*(\omega,n_{i+1}-n_i)|
\le C_q (Np)^{-2q} p^K.
\]
Therefore the total contribution made to \eqref{expectedtrace}
by all admissible indices $\vec{n}$ having $K$ pairwise distinct coordinates is
\[
\le C_q N\cdot (Np)^{-2q}N^Kp^K
\le C_q N\cdot (Np)^{q-2q}
= C_q N\cdot (Np)^{-q}
\]
since $Np\ge 1$ and $K\le q$.
Summing over all $K$ gives \eqref{traceclaim}.
\end{proof}

\section{Reduction of degree of multilinearity}

The proof of Theorem~\ref{thm:main} will proceed by descending induction on the degree of 
multilinearity, $M$. 
In this section we set up a simple lemma which implements the inductive step.

It will be useful to reformulate and to modestly generalize the operators $T_\omega$.
Consider a scalar-valued multilinear form
\begin{equation} \label{symmetricform}
\scriptt(f_1,\cdots,f_M,\rho) = \sum_{(x,y)\in[-AN,AN]^2}
\prod_{j=1}^M f_j(L_j(x,y)) \rho(L_0(x,y))
\end{equation}
where $M\ge 2$,
$f_j:\integers\mapsto\reals$,
each $L_j:\integers\to\rationals$ is $\rationals$-linear,
$L_i$ is not a scalar multiple, over $\rationals$,
of $L_j$ if $i\ne j$,
and $\rho:\integers\to\reals$.
We operate under the convention that
$f_j(L_j(x,y))$ is to be interpreted as $0$ whenever $L_j(x,y)\in\rationals\setminus\integers$,
and likewise for $\rho(L_0(x,y))$.
Moreover, all $f_j$ and $\rho$ are supported in $[-AN,AN]$.
Here $A\ge 1$ is any positive integer, which is initially $1$ but will
increase in a controlled manner with each inductive step.
We seek to bound $\scriptt(f_1,\cdots,f_M,\rho)$
by a suitable constant times $\norm{f_1}_2\norm{f_2}_2\prod_{j>2}\norm{f_j}_\infty$.
This suitable constant will depend on $A$, in a manner which will not be
specified.
In the application, $\rho$ will depend on $\omega\in\Omega$
and will be constructed from $r(\omega,\cdot)$ in a recursive manner.

By assumption, $\integers^2\owns (x,y)\mapsto (L_1(x,y),L_2(x,y))$ is injective,
and has range equal to a lattice of rank $2$.
Make a linear ``change of variables'' $(x,y)\mapsto (u,v)=\lambda (L_1(x,y),L_2(x,y))$
where $0\ne \lambda\in\integers$ is chosen so that $\lambda L_i(x,y)\in\integers$
for $i=1,2$ for all $(x,y)\in\integers^2$.
The range of $\lambda L_1$ need not be arranged to be all of $\integers$;
set $f_1\equiv 0$ at all integers not in this range,
and likewise $f_2\equiv 0$ at all integers not in the range of $\lambda L_2$,
and for $j>2$, $f_j\equiv 0$ and $\rho\equiv 0$ at all appropriate points so that
$\scriptt(f_1,\cdots,f_M,\rho)$ may be rewritten as
\[
\scriptt(f_1,\cdots,f_M\rho) = \sum_{x,y} \rho(L_0(x,y))
f_1(x)f_2(y)\prod_{3\le j\le M}f_j(L_j(x,y));
\]
$\prod_{j\ge 3}f_j(L_j(x,y))$ is interpreted as $1$ if $M=2$.
The sum is now over $(x,y)\in [-AN,AN]^2$ for a possibly increased
value of $A$. The functions $f_j$ and linear functionals $L_j,L_0$ appearing here
are not the same as those in  \eqref{symmetricform},
but the new functionals continue to satisfy all hypotheses,
and the new functions $f_j$ have all $L^p$ norms equal to the corresponding
norms of the old functions $f_j$.

By Cauchy-Schwarz,
\begin{multline*}
|\scriptt(f_1,\cdots,f_M,\rho)|^2
\\
\le \norm{f_1}_2^2 \sum_x \sum_{y,y'} f_2(y)f_2(y')\rho(L_0(x,y))\rho(L_0(x,y'))
\prod_{j>2} f_j(L_j(x,y))f_j(L_j(x,y'))
\end{multline*}
where $x,y,y'$ are all restricted to $[-AN,AN]$.
Substitute $y'=y+z$ to reexpress the triple sum as
\[
\sum_z \sum_{x,y}
\rho^z(L(x,y))\prod_{j\ge 2}f_j^z(L_j(x,y))
\]
where $L_2(x,y)=y$,
$z\in [-2AN,A2N]$, 
\[
f_j^z(u)=f_j(u)f_j(u+L_j(0,z)),
\]
and
\[
\rho^z(u) = \rho(u)\rho(u+L_0(0,z)).
\]
Thus
\begin{gather*}
|\scriptt(f_1,\cdots,f_M,\rho)|^2
\le \norm{f_1}_2^2 
\sum_z \big|\scriptt^z(f_2^z,\cdots,f_M^z,\rho^z)\big|
\\
\intertext{where}
\scriptt^z(f_2^z,\cdots,f_M^z,\rho^z)
= \sum_{x,y} \rho^z(L(x,y))\prod_{j=2}^M f_j^z(L_j(x,y))
\end{gather*}
takes the same form as did $\scriptt$,
with the primary change
that the number of functions $f_j$ has been reduced by one.

Now $f_2^z(L_2(x,y))\equiv f_2(y)f_2(y+z)$,
so
\begin{equation} \label{eq:apparentNloss1}
\sum_z\norm{f_2^z}_2\le CN^{1/2}\norm{f_2}_2^2
\end{equation}
by Cauchy-Schwarz.
Likewise
since all functions are supported in $[-AN,AN]$,
\begin{equation} \label{eq:apparentNloss2}
\norm{f^z_3}_2\le CN^{1/2}\norm{f_3}_\infty^2.
\end{equation}

Certain values of the parameter
$z$ are exceptional, and will be treated as follows.
By Cauchy-Schwarz,
\[
\norm{f_2^z}_1\le\norm{f_2}_2^2
\]
for all $z$.
Likewise $\norm{\rho^z}_1\le\norm{\rho}_2^2$.
Therefore for any $z$,
\begin{align*}
|\scriptt^z(f_2^z,\cdots,f_M^z,\rho^z)|
&\le \norm{f_2^z}_1\prod_{j>2}\norm{f_j^z}_\infty\sup_y\sum_x|\rho^z(L_0(x,y))|
\\
&\le \norm{f_2}_2^2\prod_{j>2}\norm{f_j}_\infty^2\sup_y\sum_x|\rho^z(L_0(x,y))|
\\
&\le \norm{f_2}_2^2\prod_{j>2}\norm{f_j}_\infty^2\norm{\rho^z}_1
\\
&\le \norm{f_2}_2^2\prod_{j>2}\norm{f_j}_\infty^2\norm{\rho}_2^2.
\end{align*}

Define
\[
\norm{\scriptt(\rho)}_{\text{op}}
= \sup|\scriptt(f_1,\cdots,f_M,\rho)|
\]
where the supremum is taken over all functions
satisfying $\norm{f_j}_2\le 1$ for $j\in\{1,2\}$
and $\norm{f_j}_\infty\le 1$ for $j>2$.
Similarly
\[
\norm{\scriptt^z(\rho^z)}_{\text{op}}
= \sup|\scriptt(f_2,\cdots,f_M,\rho^z)|
\]
where the supremum is taken over all functions
satisfying $\norm{f_j}_2\le 1$ for $j\in\{2,3\}$
and $\norm{f_j}_\infty\le 1$ for $j>3$.

Write $|B|$ to denote the cardinality of a set $B$.
We have shown: 
\begin{lemma} \label{lemma:CS}
For any set $B\subset\integers$,
\begin{equation} \label{eq:TT*bound}
\norm{\scriptt(\rho)}_{\text{op}}
\le CN^{1/2}\max_{z\notin B} 
\norm{\scriptt^z(\rho^z)}_{\text{op}}^{1/2}
+|B|^{1/2}\cdot \norm{\rho}_2.
\end{equation}
\end{lemma}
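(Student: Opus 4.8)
The plan is simply to collect the estimates derived in the paragraphs above into a single inequality; no new idea is required. First I would carry out the linear change of variables that rewrites the form as $\scriptt(f_1,\dots,f_M,\rho)=\sum_{x,y}\rho(L_0(x,y))\,f_1(x)f_2(y)\prod_{3\le j\le M}f_j(L_j(x,y))$, with the sum over $(x,y)\in[-AN,AN]^2$, so that the two $L^2$-functions are carried by the two coordinate projections. Applying Cauchy--Schwarz in $x$ (discarding $f_1$ at the cost of $\norm{f_1}_2^2$) and substituting $y'=y+z$ produces
\[
|\scriptt(f_1,\dots,f_M,\rho)|^2\le \norm{f_1}_2^2\sum_z \big|\scriptt^z(f_2^z,\dots,f_M^z,\rho^z)\big|,
\]
where $f_j^z(u)=f_j(u)f_j(u+L_j(0,z))$, $\rho^z(u)=\rho(u)\rho(u+L_0(0,z))$, and $\scriptt^z$ has the same shape as $\scriptt$ but with one fewer function $f_j$; this is the step that converts a degree-$M$ problem into a family, indexed by $z$, of degree-$(M-1)$ problems.

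Next I would split the $z$-sum at the exceptional set $B$. For $z\notin B$, the definition of $\norm{\scriptt^z(\rho^z)}_{\text{op}}$ gives
\[
\big|\scriptt^z(f_2^z,\dots,f_M^z,\rho^z)\big|\le \norm{\scriptt^z(\rho^z)}_{\text{op}}\,\norm{f_2^z}_2\,\norm{f_3^z}_2\prod_{j>3}\norm{f_j^z}_\infty,
\]
and the $z$-sum of the right side is controlled by the facts already recorded: $\norm{f_j^z}_\infty\le\norm{f_j}_\infty^2$ for $j>3$, the support bound \eqref{eq:apparentNloss2}, and, decisively, the $\ell^1$-in-$z$ estimate \eqref{eq:apparentNloss1}. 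Under the normalization $\norm{f_1}_2,\norm{f_2}_2\le1$ and $\norm{f_j}_\infty\le1$ for $j\ge3$, this shows that the $\{z\notin B\}$ portion of $\norm{f_1}_2^2\sum_z|\scriptt^z|$ is at most $CN\max_{z\notin B}\norm{\scriptt^z(\rho^z)}_{\text{op}}$. For $z\in B$, I would instead use the crude bound $|\scriptt^z(f_2^z,\dots,f_M^z,\rho^z)|\le\norm{f_2}_2^2\prod_{j>2}\norm{f_j}_\infty^2\,\norm{\rho}_2^2$ established above (valid for every $z$), so that the $\{z\in B\}$ portion contributes at most $|B|\,\norm{\rho}_2^2$.

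Adding the two contributions yields
\[
\norm{\scriptt(\rho)}_{\text{op}}^2\le CN\max_{z\notin B}\norm{\scriptt^z(\rho^z)}_{\text{op}}+|B|\,\norm{\rho}_2^2,
\]
and taking square roots, using $\sqrt{a+b}\le\sqrt a+\sqrt b$, gives exactly \eqref{eq:TT*bound}. Since every ingredient has already been proved, there is no real obstacle; the only place that calls for care is the accounting of the powers of $N$. Two separate factors of $N^{1/2}$ appear on the good range --- one from $\norm{f_3^z}_2\le CN^{1/2}\norm{f_3}_\infty^2$ and one from the summation $\sum_z\norm{f_2^z}_2\le CN^{1/2}\norm{f_2}_2^2$ --- and they multiply to $N$ inside the square, which after the final square root is precisely the $N^{1/2}$ in the statement; it is also worth noting that when $M=2$ the function $f_3$ is absent, only one such factor occurs, and the inequality holds a fortiori.
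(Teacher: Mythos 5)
Your proof is correct and follows essentially the same route as the paper: apply Cauchy--Schwarz in $x$ after the change of variables, substitute $y'=y+z$, split the $z$-sum at the exceptional set $B$, combine $\sum_z\norm{f_2^z}_2\le CN^{1/2}\norm{f_2}_2^2$ with $\norm{f_3^z}_2\le CN^{1/2}\norm{f_3}_\infty^2$ on the good range and the crude $\norm{\rho}_2^2$ bound on $B$, then take square roots. Your bookkeeping of the two factors of $N^{1/2}$ and the remark about the degenerate $M=2$ case are both accurate.
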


\begin{remark}
It may be helpful to understand the role of the different terms here,
and the question of whether there is any essential loss when Lemma~\ref{lemma:CS} is applied.
The factors of $N^{1/2}$ in \eqref{eq:apparentNloss1} and \eqref{eq:apparentNloss2}
are natural, cannot be improved, and represent no loss. 
Indeed,
when $\rho(x)\asymp N^{-1}$ for all $x\in[-N,N]$,
$\norm{\scriptt^z(\rho^z)}_{\text{op}}=O(N^{-1})$ for all $z$, compensating exactly
for the leading factor of $N^{1/2}$ in \eqref{eq:TT*bound}; thus this factor does not
in and of itself represent any loss.
As the support of $\rho$ becomes sparser, $\norm{\rho}_\infty$ becomes large
in order to maintain the normalization $\norm{\rho}_1\asymp 1$. Since $\rho^z$ is a product
of two factors of $\rho$, $\norm{\rho^z}_\infty$ becomes larger for many values of $z$,
essentially by a factor of $N^2\norm{\rho}_\infty^{-2}$ relative to the non-sparse averaging case.
But for this loss there is also compensation; the support of $\rho^z(x)=\rho(x)\rho(x+z)$
is, on the average with respect to $z$, correspondingly smaller than the support of $\rho$. 
For natural classes of random probability measures $\rho_\omega$,
a simple back-of-the-envelope calculation gives heuristically that
$N^{1/2}\norm{\scriptt^z(\rho^z)}_{\text{op}}
=O(1)$ for typical $z,\omega$, provided that the support of $\rho$ has cardinality
$\gg N^{1/2}$. Thus one may expect to have no essential loss in applying Lemma~\ref{lemma:CS},
when dealing with random $\rho$ whose supports are not too small.

However, if the support of $\rho$ has cardinality $\ll N$, 
then $\rho_z$ will vanish identically for most values of $z$,
and Lemma~\ref{lemma:CS} must yield poor bounds.
It is this issue which leads to the restrictions on $\gamma$ in our main theorems. 
\end{remark}

In our application of Lemma~\ref{lemma:CS}, $\rho$ will take the form
\begin{equation} \label{rhoform}
\rho(x)= \rho_\omega(x)= \prod_{i\in I} r(\omega,x+z_i)
\end{equation}
where $I$ is some finite index set, and it will always be the case that
\[\text{$z_i\ne z_j$ whenever $i\ne j$.}\]
Then 
\[\rho^z(x) = \prod_{i\in I} r(\omega,x+z_i)r(\omega,x+z_i+L(0,z)).\]
Here $z\mapsto L(0,z)$ is injective.
In this situation, we define the set $B$ of exceptional values of the
parameter $z$ to be
\begin{equation} \label{Bdefn}
B=\{z: \text{ there exist } i,j\in I \text{ such that } z_i=z_j+L(0,z)\}.
\end{equation}
Then 
\[
|B|\le |I|^2,
\]
since $z\mapsto L(0,z)$ is injective.
Moreover, if $z\notin B$,
$\rho^z$ takes the same form as did $\rho$,
with the size of $I$ increased;
$B$ is defined so that the condition
\eqref{Bdefn}
is inherited from $\rho$ by $\rho^z$.

With this definition of $B$, then,
\begin{equation} \label{recursivebound}
\norm{\scriptt(\rho)}_{\text{op}}
\le CN^{1/2}\max_{z\notin B} 
\norm{\scriptt^z(\rho^z)}_{\text{op}}
+C \norm{\rho}_2
\end{equation}
where $C$ depends only on $A,|I|$.

\section{Proof of Theorem~\ref{thm:main}}

Let $\rho$ be of the form $\eqref{rhoform}$.
Then
\[
\expect_\omega(\norm{\rho_\omega}_2^2)
= \sum_x \expect\big(\prod_{i\in I} r(\omega,x+z_i)^2\big).
\]
For each $x$, the $|I|$ factors $r^2(\omega,x+z_i)$
are jointly independent since $\{z_i\}$ are distinct.
By definition,
\[
\expect(r(\cdot,x)^2)
\le C(Np)^{-2}p = CN^{-2}p^{-1}
\]
for some constant $C<\infty$.
Therefore
\[
\expect\big(\prod_{i\in I} r(\omega,x+z_i)^2\big)
\le C^{|I|} N^{-2|I|}p^{-|I|}
\]
and hence
\begin{equation}
\expect_\omega(\norm{\rho_\omega}_2^2)
\le C^{|I|} N^{1-2|I|}p^{-|I|}.
\end{equation}

A stronger result will be required.
The supremum $\sup_z$ in the next lemma is taken over
all $|I|$-tuples $z=(z_1,\cdots,z_{|I|})$ satisfying $z_i\ne z_j$ whenever $i\ne j$.

\begin{lemma} \label{lemma:secondbigq}
Let $\rho_{\omega,z}(x) = \prod_{i\in I} r(\omega,x+z_i)$
where $z_i\ne z_j$ whenever $i\ne j$.
Then for any $q<\infty$ 
there exists $C_q<\infty$ independent of $z$,
such that for every $\xi\in\torus$,
\begin{equation}
\label{eq:bigq1}
\expect_\omega(|\widehat{\rho_{\omega,z}}(\xi)|^q)
\le C_q \big(N^{-|I|+\tfrac12}p^{-|I|/2} \big)^q.
\end{equation}
Moreover, for any $\eps>0$
there exists $C_\eps<\infty$ such that
\begin{gather}
\label{eq:bigq2}
\expect_\omega\big(\sup_z\norm{\widehat{\rho_{\omega,z}}}_{L^\infty(\torus)}\big)
\le C_{\eps,|I|} 
N^{-|I|+\tfrac12+\eps}p^{-|I|/2}
\end{gather}
\end{lemma}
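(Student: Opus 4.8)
The plan is to prove the pointwise moment bound \eqref{eq:bigq1} by the method of moments for even exponents, and then to deduce \eqref{eq:bigq2} from it by discretizing the supremum over $\xi\in\torus$ and taking a union bound over $\xi$ and over $z$. Throughout I would use the hypothesis $p\ge N^{-\gamma}$ in the form $Np^{|I|}\ge 1$ --- this is where the restriction on $\gamma$ enters, and it holds in every application of this lemma, since in the descending induction used to prove Theorems~\ref{thm:main} and \ref{thm:Carleson} the index set $I$ at most doubles at each step while the admissible range of $\gamma$ contracts correspondingly.

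For \eqref{eq:bigq1}, by monotonicity of $L^q$ norms on the probability space $\Omega$ it suffices to treat $q=2k$ with $k\in\naturals$. Expanding $|\widehat{\rho_{\omega,z}}(\xi)|^{2k}=\widehat{\rho_{\omega,z}}(\xi)^k\,\overline{\widehat{\rho_{\omega,z}}(\xi)}^{\,k}$ as a sum over $2k$-tuples $(n_1,\dots,n_{2k})$ of integers, $\expect_\omega|\widehat{\rho_{\omega,z}}(\xi)|^{2k}$ is a sum of at most $(CN)^{2k}$ terms, each a unimodular phase times $\expect_\omega\prod_{a=1}^{2k}\prod_{i\in I}r(\omega,n_a+z_i)$, a product of $2k|I|$ factors $r(\omega,\cdot)$. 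Since $\{r(\omega,m)\}_{m\in\integers}$ are independent with mean zero, a term vanishes unless every integer occurring among the $2k|I|$ arguments $n_a+z_i$ occurs there at least twice; for a surviving term in which $t$ distinct integers occur (so $t\le k|I|$), with multiplicities $m_v\ge 2$, the bounds $\expect_\omega(r(\omega,x)^2)\asymp N^{-2}p^{-1}$ and $\norm{r(\omega,\cdot)}_{\ell^\infty}\le 2N^{-1}p^{-1}$ give
\[
\big|\expect_\omega\!\prod_{a,i}r(\omega,n_a+z_i)\big|
\le\prod_v\expect_\omega|r(\omega,v)|^{m_v}
\le C_{k,|I|}\,N^{-2k|I|}\,p^{\,t-2k|I|}.
\]

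Next I would count the surviving tuples. Partition $\{1,\dots,2k\}$ into the classes on which $a\mapsto n_a$ is constant, let $u_1,\dots,u_\ell$ be the distinct values with multiplicities $c_1,\dots,c_\ell$, put $Z=\{z_i:i\in I\}$, and form the graph on $\{u_1,\dots,u_\ell\}$ joining $u_a,u_b$ when $(u_a+Z)\cap(u_b+Z)\ne\emptyset$; let $m$ be the number of its connected components. One checks: (i) no component consists of a single index, since the translate $u+Z$ alone has all $|I|$ of its (distinct) elements of multiplicity one; hence each component uses at least two of the $2k$ indices and $m\le k$. (ii) Once the pattern is fixed --- the partition, the component graph, and, along a spanning tree of each component, the displacement between successive classes (each a nonzero element of the finite set $Z-Z$) --- the tuple is determined by one base value per component, ranging over an interval of length $\le 2N+1$; since there are at most $C_{k,|I|}$ patterns, the number of surviving tuples is $\le C_{k,|I|}N^m$. (iii) Each component contains a translate $u+Z$, hence at least $|I|$ distinct values, so $t\ge m|I|$ and therefore $k|I|-t\le|I|(k-m)$. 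Combining these with $p\le 1$ and $Np^{|I|}\ge 1$,
\begin{align*}
\expect_\omega|\widehat{\rho_{\omega,z}}(\xi)|^{2k}
&\le C_{k,|I|}\,N^{k-2k|I|}p^{-k|I|}\sum_{\text{patterns}}\big(Np^{|I|}\big)^{-(k-m)} \\
&\le C_{k,|I|}\,N^{k-2k|I|}p^{-k|I|}
= C_{k,|I|}\big(N^{-|I|+\tfrac12}p^{-|I|/2}\big)^{2k},
\end{align*}
which is \eqref{eq:bigq1}.

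For \eqref{eq:bigq2}, I would first note that $|\widehat{\rho_{\omega,z+(a,\dots,a)}}(\xi)|=|\widehat{\rho_{\omega,z}}(\xi)|$ for all $a\in\integers$, and that $\widehat{\rho_{\omega,z}}\equiv 0$ unless $\max_iz_i-\min_iz_i\le 2N$; hence $\sup_z$ reduces to a supremum over the $\le(4N)^{|I|-1}$ tuples $z$ with $\min_iz_i=0$ and $\max_iz_i\le 2N$, for each of which $\widehat{\rho_{\omega,z}}$ is a trigonometric polynomial of degree $O(N)$. Since sampling such a polynomial at $O(N)$ equally spaced points controls its sup-norm (Bernstein's inequality), there is a set $S$ of at most $C_{|I|}N^{|I|}$ admissible pairs $(z,\xi)$ with $\sup_z\norm{\widehat{\rho_{\omega,z}}}_{L^\infty(\torus)}\le 2\max_{(z,\xi)\in S}|\widehat{\rho_{\omega,z}}(\xi)|$. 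Then for any $q\in\naturals$, using $\max\le\ell^q$-norm, Jensen's inequality, and \eqref{eq:bigq1},
\begin{align*}
\expect_\omega\Big(\sup_z\norm{\widehat{\rho_{\omega,z}}}_{L^\infty(\torus)}\Big)
&\le 2\Big(\sum_{(z,\xi)\in S}\expect_\omega|\widehat{\rho_{\omega,z}}(\xi)|^{q}\Big)^{1/q} \\
&\le 2\big(C_{|I|}N^{|I|}C_q\big)^{1/q}\,N^{-|I|+\tfrac12}p^{-|I|/2};
\end{align*}
choosing $q=q(\eps,|I|)$ large enough that $N^{|I|/q}\le N^{\eps}$ gives \eqref{eq:bigq2}. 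I expect the main obstacle to be the combinatorial analysis of \eqref{eq:bigq1} --- in particular facts (i) and (iii), that each cluster of translates of $Z$ among the arguments consumes at least two of the $2k$ indices and produces at least $|I|$ distinct values --- together with checking that with these in hand the powers of $N$ and $p$ balance precisely against $Np^{|I|}\ge1$; once \eqref{eq:bigq1} holds, the deduction of \eqref{eq:bigq2} is routine.
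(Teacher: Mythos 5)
Your proof is correct and follows the paper's overall route (method of moments for even exponents for \eqref{eq:bigq1}, then discretize in $\xi$, take a union bound over $z$ and $\xi$, and send $q\to\infty$ to get \eqref{eq:bigq2}), but your bookkeeping at the central combinatorial step is tighter than the paper's, and this matters. You stratify the surviving tuples by the number $m$ of connected components of the overlap graph on the distinct values $\{u_1,\dots,u_\ell\}$, show there are $O(N^m)$ tuples per pattern, show the number $t$ of distinct arguments satisfies $t\ge m|I|$, and then sum $(Np^{|I|})^{-(k-m)}$ over finitely many patterns, which is $O(1)$ precisely because $Np^{|I|}\ge 1$. The paper instead writes ``$\mu(\vec n)\le q|I|$, so $\expect\prod\rho_{\omega,z}(n_\alpha)\le C_q(Np)^{-2q|I|}p^{q|I|}$''; since $p\le 1$, that implication runs the wrong way, so the paper's argument has a genuine slip at exactly the place where your version is careful (the correct input there is a \emph{lower} bound on $\mu$, namely $\mu\ge \scriptc|I|$, balanced against the count $N^{\scriptc}$). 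You also correctly observe that the lemma as stated, with no restriction on $p$, is false unless $Np^{|I|}\ge 1$: already the diagonal contribution (all $n_\alpha$ equal) to $\expect|\widehat{\rho_{\omega,z}}(\xi)|^{2k}$ is $\asymp N(Np)^{-2k|I|}p^{|I|}$, which exceeds the claimed bound for $k\ge 2$ when $Np^{|I|}<1$. That hypothesis does hold wherever the lemma is invoked in the proof of Theorem~\ref{thm:main} (there $|I|=2^{M-k}\le 2^{M-2}$ while $p\ge N^{-\gamma}$ with $\gamma<2^{-M}$), but it really should appear in the statement; good catch. Two minor remarks: your translation-in-$z$ reduction to $O(N^{|I|-1})$ representatives is a nice economy but unnecessary --- the crude $O(N^{|I|})$ count, as in the paper, is absorbed by taking $q$ large; and your claim (i), ``no component consists of a single index,'' is slightly overstated (a singleton component $\{u_j\}$ is allowed provided $c_j\ge 2$), though the conclusion you actually use --- each component consumes at least two of the $2k$ indices, hence $m\le k$ --- is correct.
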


The proof will be given below.
By Parseval's theorem, \eqref{eq:bigq2} implies
\begin{equation}
\label{eq:bigq3}
\expect_\omega(\sup_z\norm{\rho_{\omega,z}}_2)
\le C_\eps C^{|I|}N^{-|I|+\tfrac12+\eps}p^{-|I|/2}.
\end{equation}

We are now in a position to argue by induction on the degree of multilinearity
$M$. Some additional notation is required, because the base case in the induction
depends on $M$. 
Let $\scriptt(\omega)=\scriptt_M(\omega)$ be the $M$-linear scalar-valued form to be analyzed;
thus $\rho=r$.
Define $\scriptb_{M+1}=\emptyset$, and $\scriptb_M=\{0\}\subset\integers^1$.
For $z\notin \scriptb_M$
define $\scriptt_{M-1}(\omega,z)$ to be the associated $M-1$--linear scalar form,
as discussed above.
Define $\scriptb_{M-1}$ to be the set of all $(z_1,z_2)\in\integers^2$ such that
$z_1\notin \scriptb_M$ and $z_2$ does not lie in the finite exceptional set $B$
associated to $z_1$ in the above discussion.
For $(z_1,z_2)\notin \scriptb_{M-1}$
let $\scriptt_{M-2}(\omega,(z_1,z_2))$ be the associated $M-2$--linear scalar form.
Continue, constructing 
$\scriptt_{M-k}(\omega,z)$ for $k=0,1,2,\cdots,M-2$
for (most) $z\in\integers^k$, and exceptional sets $\scriptb_{M-k}\subset\integers^{k+1}$.
For each $z\notin \scriptb_{M-k}$, $\{\zeta: (z,\zeta)\in \scriptb_{M-k-1}\}$
is a finite set whose cardinality is bounded by a constant which depends only on $k$.
By \eqref{recursivebound},
\begin{equation}
\norm{\scriptt_k(\omega,(z_1,\cdots,z_{M-k})}_{\text{op}}
\le CN^{1/2}\max_{\zeta: (z,\zeta) \notin \scriptb_{k}} 
\norm{\scriptt_{k-1}(\omega,(z_1,\cdots,z_{M-k},\zeta))}_{\text{op}}
+C \norm{\rho_{\omega,(z_1,\cdots,z_{M-k})}}_2.
\end{equation}

\begin{lemma} \label{lemma:induction}
Suppose that $p\ge N^{-1}$. Then
for any $\eps>0$ and $k\in\{2,3,\cdots,M\}$,
\begin{equation}
\expect_\omega\big(\sup_{z\notin \scriptb_{k+1}}\norm{\scriptt_k(\omega,z)}_{\text{op}}\big)
\le C_\eps N^{1+\eps}
N^{-2^{1-k}}
N^{-2^{M-k}}
p^{-2^{M-k-1}}.
\end{equation}
\end{lemma}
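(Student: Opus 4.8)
The plan is to argue by induction on $k$, starting from the bilinear base case $k=2$ and climbing to $k=M$, with the recursion \eqref{recursivebound} furnishing the inductive step.

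For the base case: after the change of variables performed in the reduction above, $\scriptt_2(\omega,z)$ is a scalar bilinear form $\sum_{x,y}\rho_{\omega,z}(L_0(x,y))f_1(x)f_2(y)$ with $\norm{f_1}_2,\norm{f_2}_2\le 1$, so Plancherel's theorem on $\torus$ bounds $\norm{\scriptt_2(\omega,z)}_{\text{op}}$ by a constant multiple of $\norm{\widehat{\rho_{\omega,z}}}_{L^\infty(\torus)}$, the constant depending only on $A$ and on the linear functionals. At level $k$ the weight $\rho_{\omega,z}$ is a product of $|I|=2^{M-k}$ pairwise distinct translates of $r(\omega,\cdot)$: there is a single factor of $r$ at level $M$, and $|I|$ doubles at each reduction step because $\rho^z(x)=\rho(x)\rho(x+L_0(0,z))$. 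Thus at $k=2$ one has $|I|=2^{M-2}$, and \eqref{eq:bigq2} of Lemma~\ref{lemma:secondbigq} together with $\expect_\omega\sup_z$ gives $\expect_\omega\sup_z\norm{\scriptt_2(\omega,z)}_{\text{op}}\le C_\eps N^{-2^{M-2}+\frac12+\eps}p^{-2^{M-3}}$, which is exactly $C_\eps N^{1+\eps}N^{-2^{1-2}}N^{-2^{M-2}}p^{-2^{M-2-1}}$.

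For the inductive step, fix $k\ge 3$, assume the bound at level $k-1$, and apply \eqref{recursivebound} inside $\sup_{z\notin\scriptb_{k+1}}$. Taking $\expect_\omega$ and pulling the square root outside the expectation by Cauchy--Schwarz, the principal term is $CN^{1/2}\big(\expect_\omega\sup_{(z,\zeta)\notin\scriptb_k}\norm{\scriptt_{k-1}(\omega,(z,\zeta))}_{\text{op}}\big)^{1/2}$, to which the inductive hypothesis applies directly, while the error term is $C\expect_\omega\sup_z\norm{\rho_{\omega,z}}_2$, which \eqref{eq:bigq3} bounds by $C_\eps N^{1/2+\eps-2^{M-k}}p^{-2^{M-k-1}}$ (now with $|I|=2^{M-k}$). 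The exponent bookkeeping then closes: halving each $N$- and $p$-exponent of the level-$(k-1)$ bound and multiplying by the extra $N^{1/2}$ turns $N^{1+\eps}$ back into $N^{1+\eps/2}$ --- so no power of $N$ beyond $N^{1+\eps}$ accumulates over the $M-2$ steps, although $\eps$ shrinks, which is harmless --- and reproduces precisely the exponents $2^{1-k}$, $2^{M-k}$, $2^{M-k-1}$ of the statement. The one inequality that genuinely needs to be checked, and the only tight point in the whole argument, is that the error term is dominated by the claimed bound: comparing $N$-exponents this reduces to $\tfrac12\le 1-2^{1-k}$, i.e. to $k\ge 2$, which is exactly the hypothesis of the lemma and is the ultimate source of the restriction on $\gamma$ in Theorem~\ref{thm:main}. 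The assumption $p\ge N^{-1}$ enters only through Lemma~\ref{lemma:secondbigq}.
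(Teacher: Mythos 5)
Your proof matches the paper's argument in all essentials: the same ascending induction on $k$, the same base case $k=2$ via Plancherel and Lemma~\ref{lemma:secondbigq} (using $|I|=2^{M-2}$), the same use of the recursion \eqref{recursivebound}/\eqref{eq:TT*bound} with a Jensen/Cauchy--Schwarz step to move the expectation inside the square root, and the same identification of $k\ge 2$ as the inequality that absorbs the $\norm{\rho_{\omega,z}}_2$ error term. One small caveat: your closing remark that ``the assumption $p\ge N^{-1}$ enters only through Lemma~\ref{lemma:secondbigq}'' is not something the text supports --- Lemma~\ref{lemma:secondbigq} states no hypothesis on $p$, and the paper's proof of this lemma never explicitly invokes $p\ge N^{-1}$ --- but this remark is peripheral and does not affect the validity of your argument.
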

Specializing this conclusion to $k=M$ yields the sought-for bound.

\begin{corollary}
Provided that $p\ge N^{-1}$,
\begin{equation}
\expect_\omega\big(\norm{\scriptt(\omega)}_{\text{op}}\big)
\le C_\eps N^{\eps}
N^{-2^{1-M}}
p^{-1/2}.
\end{equation}
If $p\ge N^{-\gamma}$ and if
$\gamma < 2^{-(M-2)}$
then 
\begin{equation}
\expect_\omega\big(\norm{\scriptt(\omega)}_{\text{op}}\big)
= O(N^{-\delta})
\text{ for any $\delta < \tfrac12 (2^{-(M-2)}-\gamma)$.}
\end{equation}
\end{corollary}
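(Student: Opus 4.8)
The plan is to obtain the Corollary as the $k=M$ instance of Lemma~\ref{lemma:induction}, followed by elementary manipulation of the exponents of $N$ and $p$. First I would note that $\scriptb_{M+1}=\emptyset$ and that $\scriptt_M(\omega)=\scriptt(\omega)$ is the base form of the recursion, built directly from $\rho=r$ and carrying no auxiliary parameter $z$; thus the supremum $\sup_{z\notin\scriptb_{M+1}}$ appearing in Lemma~\ref{lemma:induction} is vacuous when $k=M$, and the lemma reads
\[
\expect_\omega\big(\norm{\scriptt(\omega)}_{\text{op}}\big)
\le C_\eps N^{1+\eps}\,N^{-2^{1-M}}\,N^{-2^{M-M}}\,p^{-2^{M-M-1}}
= C_\eps N^{1+\eps}\,N^{-2^{1-M}}\,N^{-1}\,p^{-1/2},
\]
since $2^{M-M}=1$ and $2^{M-M-1}=\tfrac12$. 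The factors $N^{1+\eps}$ and $N^{-1}$ combine to $N^{\eps}$, which is exactly the first displayed inequality of the Corollary; the hypothesis $p\ge N^{-1}$ is precisely that of Lemma~\ref{lemma:induction}.

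For the second assertion, under the hypothesis $p\ge N^{-\gamma}$ I would insert $p^{-1/2}\le N^{\gamma/2}$ into this bound, obtaining $\expect_\omega\big(\norm{\scriptt(\omega)}_{\text{op}}\big)\le C_\eps N^{\eps+\gamma/2-2^{1-M}}$. The one identity to record is $2^{1-M}=\tfrac12\cdot 2^{-(M-2)}$, so the exponent equals $\eps-\tfrac12\big(2^{-(M-2)}-\gamma\big)$. When $\gamma<2^{-(M-2)}$ this is strictly negative for all sufficiently small $\eps>0$; given a target $\delta<\tfrac12\big(2^{-(M-2)}-\gamma\big)$, choosing $\eps=\tfrac12\big(2^{-(M-2)}-\gamma\big)-\delta>0$ yields $\expect_\omega\big(\norm{\scriptt(\omega)}_{\text{op}}\big)=O(N^{-\delta})$, as claimed.

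At this stage there is no genuine obstacle: the Corollary is a bookkeeping specialization of Lemma~\ref{lemma:induction}. The substantive difficulty lies inside that lemma, in the descending induction on the degree of multilinearity. Were one to carry it out from scratch, the crux would be to iterate the recursive estimate \eqref{recursivebound} while balancing, at each step, the factor $N^{1/2}$ produced by Lemma~\ref{lemma:CS} against the $\ell^2$-smallness of the twisted weight $\rho^z$ furnished by Lemma~\ref{lemma:secondbigq} and \eqref{eq:bigq3}, and tracking the exponent $2^{1-k}$ together with the power of $p$ through the recursion. This balancing is exactly what forces the restriction $\gamma<2^{-(M-2)}$ anticipated in the Remark, and it is the only delicate point in the argument.
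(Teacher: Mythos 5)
Your proposal is correct and matches the paper's intent exactly: the paper itself states only that the Corollary follows by ``specializing this conclusion to $k=M$'' in Lemma~\ref{lemma:induction}, and your computation $N^{1+\eps}\cdot N^{-1}=N^\eps$, the substitution $p^{-1/2}\le N^{\gamma/2}$, and the identity $2^{1-M}=\tfrac12\cdot 2^{-(M-2)}$ are precisely the bookkeeping that specialization requires.
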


\begin{proof}[Proof of Lemma~\ref{lemma:induction}]
We proceed by ascending induction on $k$.
$\scriptt_k(\omega,z)$ is associated to an index set $I$
of cardinality $|I| = 2^{M-k}$.
In the base case $k=2$,
$\scriptt_0(\omega,z)$ is the bilinear form associated
to a linear operator defined, in appropriate coordinates,
by convolution with $\rho_{\omega,z}$.
$\norm{\scriptt_0(\omega,z)}_{\text{op}}$ is simply the $L^2(\integers)\to L^2(\integers)$
operator norm of this convolution operator, which is the $L^\infty$
norm of the Fourier transform $\widehat{\rho_{\omega,z}}$.
Therefore by Lemma~\ref{lemma:secondbigq},
\begin{align*}
\expect_\omega\sup_{z\notin\scriptb_3}\norm{\scriptt_0(\omega,z)}_{\text{op}}
&\le C_\eps N^\eps N^{1/2}N^{-|I|}p^{-|I|/2}
\\
&= 
C_\eps N^\eps N^{1/2}
N^{-2^{M-2}}p^{-2^{M-3}}
\\
&= C_\eps N^{1+\eps}
N^{-2^{-1}}
N^{-2^{M-2}}p^{-2^{M-3}}
\\
&= C_\eps N^{1+\eps}
N^{-2^{1-k}}
N^{-2^{M-k}}p^{-2^{M-k-1}},
\end{align*}
which is the bound stated for $k=2$.

For the inductive step,
\begin{align*}
\expect_\omega\max_{z\notin\scriptb_{k+1}}
\norm{\scriptt_k(\omega,z)}
&\le CN^{1/2}\expect_\omega
\max_{(z,\zeta)\notin\scriptb_k} \norm{\scriptt_{k-1}(\omega,(z,\zeta))}_{\text{op}}^{1/2}
+C\expect_\omega\max_{z\notin\scriptb_{k+1}} \norm{\rho_{\omega,z}}_2
\\
&\le C_\eps N^{1/2} \Big(N^{1+\eps}
N^{-2^{1-(k-1)}}
N^{-2^{M-(k-1)}}
p^{-2^{M-(k-1)-1}}\Big)^{1/2}
+C\expect_\omega\max_z \norm{\rho_{\omega,z}}_2
\\
&\le C_\eps N^{1+\eps}
\Big(
N^{-2^{2-k}}
N^{-2^{M-k+1}}
p^{-2^{M-k}}
\Big)^{1/2}
+C\expect_\omega\max_z \norm{\rho_{\omega,z}}_2
\\
&= C_\eps N^{1+\eps}
N^{- 2^{1-k}}
N^{-2^{M-k}}
p^{-2^{M-k-1}}
+C\expect_\omega\max_z \norm{\rho_{\omega,z}}_2.
\end{align*}
The first term on the final line is of the desired form.
By Lemma~\ref{lemma:secondbigq},
\[
\expect_\omega\max_z \norm{\rho_{\omega,z}}_2
\le C_\eps N^{1/2} N^{-|I|+\eps}p^{-|I|/2}
\]
where $|I| = 2^{M-k}$. Thus
\begin{align*}
\expect_\omega\max_z \norm{\rho_{\omega,z}}_2
&\le C_\eps N^{\tfrac12+\eps} N^{-2^{M-k}}p^{-2^{M-k-1}}
\\
&= C_\eps N^{1+\eps} N^{-1/2} N^{-2^{M-k}}p^{-2^{M-k-1}}
\\
& \le C_\eps N^{1+\eps} N^{-2^{1-k}} N^{-2^{M-k}}p^{-2^{M-k-1}}
\end{align*}
since $k\ge 2$.
This completes the inductive step.
\end{proof}

\begin{proof}[Proof of Lemma~\ref{lemma:secondbigq}]
It suffices to treat the case where $q$ is an even positive integer.
Thus we may replace $q$ by $2q$.
For any $\xi\in\torus$,
\begin{align*}
\expect_\omega\big(|\widehat{\rho_{\omega,z}}(\xi)|^{2q}\big)
&= \expect_\omega
\sum_{n_1,\cdots,n_q}
\sum_{n'_1,\cdots,n'_q}
\prod_{\alpha=1}^q\rho_{\omega,z}(n_\alpha)
\prod_{\beta=1}^q\rho_{\omega,z}(n'_\beta)
e^{-i\xi(\sum_\alpha n_\alpha-\sum_\beta n_\beta)}
\\
&\le
\sum_{n_1,\cdots,n_{2q}}
\big|\expect_\omega
\prod_{\alpha=1}^{2q}\rho_{\omega,z}(n_\alpha) \big|,
\end{align*}
where $n_\alpha = n'_{\alpha-q}$ for $q>\alpha$.

For $m\in\integers$
and $\vec{n}=(n_1,\cdots,n_{2q})\in\integers^{2q}$,
define $\nu(m,\vec{n})$ to be
the number of indices $(\alpha,i)\in \{1,2,\cdots,2q\}\times I$
which satisfy $n_\alpha+z_i=m$.
Since $z_i\ne z_j$ whenever $i\ne j$, there can be at most
one such pair with a given value of $\alpha$.
For fixed $k$, the $|I|$ random variables 
$\rho_{\omega,k+z_i}$
are jointly independent and 
$\expect_\omega(\rho_{\omega,k+z_i})=0$ for each $i$.
Therefore
\[
\expect_\omega
\prod_{\alpha=1}^{2q}\rho_{\omega,z}(n_\alpha)
=0
\text{ unless  for every $m\in\integers$, } \nu(m,\vec{n})\ne 1.
\]

We say that $\vec{n}\in\integers^{2q}$ is negligible
if there exists at least one $m\in\integers$
satisfying $\nu(m,\vec{n})=1$.
The number of nonnegligible multi-indices $\vec{n}$
is $\le C_{q,|I|}(AN)^q$. 
To prove this, given $\vec{n}$,
partition the indices $1,2,\cdots,2q$ into equivalence classes, by saying that
$n_\alpha$ is equivalent to $n_\beta$ if there exist indices $i,j$
such that $n_\alpha+z_i=n_\beta+z_j$,
and forming the smallest transitive relation $\equiv$ generated
by these relations.
Each $\vec{n}$ is thereby associated to a unique equivalence relation
on $\{1,2,\cdots,2q\}$.
The number of such relations is a finite quantity, for each $q$.
Consider all $\vec{n}$ associated to a given relation, with $\scriptc$
distinct equivalence classes. $\scriptc\le q$, since each
equivalence class contains at least two elements.
If $\{\beta\}$ is a collection of indices $\alpha$,
with exactly one chosen from each equivalence class,
then for every $\alpha\notin\{\beta\}$,
$n_\alpha$ is determined from some $n_\beta$
by an equation $n_\alpha+z_j=n_\beta+z_i$.
Therefore at most $(AN)^\scriptc$ values of $\vec{n}$
remain undetermined.
Therefore there are at most $(AN)^\scriptc\le (AN)^q$
indices $\vec{n}$ associated to any given equivalence relation.

If $\vec{n}$ is not negligible then
\begin{align*}
\expect_\omega
\prod_{\alpha=1}^{2q}\rho_{\omega,z}(n_\alpha)
\le C_q(Np)^{-2q|I|}
\prod_{m: \nu(m,\vec{n})\ge 2} p
 = C_q(Np)^{-2q|I|} p^{\mu(\vec{n})}
\end{align*}
where
$\mu(\vec{n})$ is the number of $m\in\integers$
satisfying $\nu(m,\vec{n})\ge 2$.
Plainly $\mu(\vec{n})\le 2q|I|/2 = q|I|$,
so
\[
\expect_\omega
\prod_{\alpha=1}^{2q}\rho_{\omega,z}(n_\alpha)
\le  C_q(Np)^{-2q|I|} p^{q|I|}.
\]
Summing over all nonnegligible $\vec{n}$ gives
\[
\expect_\omega \big|\widehat{\rho_{\omega,z}}(\xi) \big|^{2q}
\le  C_{q,|I|}(Np)^{-2q|I|} p^{q|I|} (AN)^q
= C_{q,|I|} A^q N^{(1-2|I|)q} p^{q|I|},
\]
as was to be proved.

To derive
\eqref{eq:bigq2} is from \eqref{eq:bigq1}, 
temporarily fix any $\xi\in\torus$.
For any $q<\infty$,
\begin{multline*}
\Big(\expect_\omega\sup_z|
\widehat{\rho_{\omega,z}}(\xi)
|\Big)^q
\le
\expect_\omega\big(
\sup_z|\widehat{\rho_{\omega,z}}(\xi)|^q
\big)
\le 
\expect_\omega\big(\sum_z
|\widehat{\rho_{\omega,z}}(\xi)|^q
\big)
\\
=
\sum_z
\expect_\omega\big(
|\widehat{\rho_{\omega,z}}(\xi)|^q
\big)
\le C_{A,|I|}N^{|I|} 
\sup_z \expect_\omega\big(
|\widehat{\rho_{\omega,z}}(\xi)|^q
\big)
\\
\le C_{A,|I|}N^{|I|} 
C_q C^{|I|q}N^{-q|I|}N^{q/2}p^{-|I|q/2},
\end{multline*}
since at most $C_{A,|I|}N^{|I|}$ values of $z$ arise.
Choosing $q=|I|/\eps$ yields 
\begin{equation} \label{eq:nearlybigq2}
\expect_\omega\big(\sup_w|\widehat{\rho_{\omega,w}}(\xi)|\big)
\le C_{\eps,|I|} 
N^{-|I|+\tfrac12+\eps}p^{-|I|/2}.
\end{equation}

This is weaker than \eqref{eq:bigq3}, in which
$|\widehat{\rho_{\omega,z}}(\xi)|$
is replaced by
$\norm{\widehat{\rho_{\omega,z}}}_\infty$.
But since $\rho_{\omega,z}$
is supported on an interval $[-AN,AN]$,
by the Shannon sampling theorem
\[
\norm{\widehat{\rho_{\omega,z}}}_\infty
\le \max_{j}
|\widehat{\rho_{\omega,z}}(\xi_j)|
\]
where $\{\xi_j\}\subset\torus$ is an arithmetic progression
consisting of $KAN$ points with spacing $K^{-1}A^{-1}N^{-1}$, 
where $K$ is an absolute constant.
Since such a progression consists of $O(N)$ points,
the same reasoning used to introduce the supremum over $z$ in \eqref{eq:nearlybigq2}
also suffices to introduce the supremum over all $\xi_j$,
at the expense of another factor of $N^\eps$.
Thus \eqref{eq:bigq2} follows from \eqref{eq:nearlybigq2}.
\end{proof}

\section{Extensions}

In this section we present an extension of Theorem~\ref{thm:main},
then show how Theorem~\ref{thm:Carleson} is an almost immediate consequence
of this extension.
Finally, we show how the application to return times of random subsequences 
is deduced from Theorem~\ref{thm:Carleson}.

For $K\ge 1$ let $S_K\subset\integers^K$ be the set of all
$z=(z_1,\cdots,z_K)\in\integers^{K}$
satisfying
\begin{equation}
i\ne j\Rightarrow z_i\ne z_j.
\end{equation}
Define $\rho(\omega,z):\integers\to \reals$ by
\begin{equation}
\rho(\omega,z)(x) = 
N^{K-1} \prod_{i=1}^K r(\omega,x+z_i).
\end{equation}
Consider multilinear operators
\begin{equation}
T_{\omega,z}(f,g_1,\cdots,g_M)(x)
= \sum_y f(y)\prod_{j=1}^M g_j(L_j(x,y))
\end{equation}
where $\{L_j\}$ satisfy \dots
Define 
\[
\norm{T}_{\text{op}}
= \sup_{f,g_1,g_M} \norm{T_\omega(f,g_1,\cdots,g_M)}_2
\]
where the supremum is taken over all $f,\{g_j\}$
satisfying $\norm{f}_2\le 1 $
and $\norm{g_j}_\infty\le 1$.
Since
\[
\expect_\omega\norm{\rho(\omega,z)}_1
\asymp N^{K-1} (Np)^{-K} \cdot N\cdot p^K\equiv 1,
\]
the factor $N^{K-1}$ in the definition of $\rho(\omega,z)$ is the natural
normalization here.

The same analysis as above establishes:
\begin{theorem} \label{thm:productofr}
There exist $\gamma=\gamma(M,K)>0$
and $\delta>0$,  $C<\infty$
such that for all $p\ge N^{-\gamma}$,
\begin{equation}
\expect_\omega \sup_{z\in S_K}\norm{T_{\omega,z}}_{\text{op}}
\le CN^{-\delta}
\end{equation}
uniformly for all $N$.
\end{theorem}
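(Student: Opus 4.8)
The plan is to follow the proof of Theorem~\ref{thm:main} essentially verbatim, with the single weight $r(\omega,\cdot)$ replaced throughout by the normalized product $\rho(\omega,z)$. Only two pieces of bookkeeping change: the index set attached to the weight now has initial cardinality $K$ rather than $1$, and the weight carries the normalizing power $N^{K-1}$. Both propagate transparently through the recursion, and since Lemma~\ref{lemma:secondbigq} is already stated for index sets and shift-tuples of arbitrary size, no new estimate is required.

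\emph{Reduction to a scalar form.} For fixed $z\in S_K$, pairing $T_{\omega,z}(f,g_1,\dots,g_M)$ against a test function $h$ with $\norm{h}_2\le 1$ exhibits $\norm{T_{\omega,z}}_{\text{op}}$ as the operator norm of a scalar multilinear form of the type \eqref{symmetricform}: it has the $M+2$ function arguments $h,f,g_1,\dots,g_M$, of which the first two are normalized in $\lt$ and the rest in $\ell^\infty$, and it is weighted by $\rho(\omega,z)$ along the functional $L_0$. The functionals $x$, $y$, $L_0$, $L_1,\dots,L_M$ are pairwise non-proportional over $\rationals$ by the hypotheses on the $L_j$, so all structural hypotheses needed for the reduction leading to Lemma~\ref{lemma:CS} are in force (including the passage to a sum over $[-AN,AN]^2$ with a controlled increase of $A$). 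The top-level form so obtained — written $\scriptt_{M+2}(\omega,z)$ in the notation of the proof of Theorem~\ref{thm:main} — has an associated index set of cardinality $K$, and its weight is $N^{K-1}$ times a product of $K$ independent copies of $r$ with pairwise distinct shifts.

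\emph{The induction.} Run the descending induction of Lemma~\ref{lemma:induction} from $k=M+2$ down to $k=2$, applying the Cauchy--Schwarz step \eqref{recursivebound} at each stage. Each such step doubles the size of the index set and squares the normalizing power, so at level $k$ the weight is $N^{2^{M+2-k}(K-1)}$ times a product of $K2^{M+2-k}$ independent $r$-factors with pairwise distinct shifts. Feeding this into Lemma~\ref{lemma:secondbigq}, applied to the un-normalized product with $|I|=K2^{M+2-k}$, the normalizing power $N^{2^{M+2-k}(K-1)}$ is absorbed exactly against the factors of $N^{-1}$ that $|I|$ produces; the only substantive difference from the proof of Theorem~\ref{thm:main} is that every power of $p$ occurring there is raised to the $K$-th power, since each of the ($K$-fold more numerous) $r$-factors contributes one power of $p$ to each Fourier-moment estimate. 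Carrying out the same arithmetic as in Lemma~\ref{lemma:induction} then gives, for $p\ge N^{-1}$, any $\eps>0$, and $2\le k\le M+2$,
\begin{equation*}
\expect_\omega\big(\sup_z\norm{\scriptt_k(\omega,z)}_{\text{op}}\big)
\le C_\eps\, N^{1+\eps}\, N^{-2^{1-k}}\, N^{-2^{M+2-k}}\, p^{-K2^{M+1-k}},
\end{equation*}
the supremum being over shift-tuples of the appropriate size; when $k=M+2$ this supremum is exactly $\sup_{z\in S_K}$. The base case $k=2$ is \eqref{eq:bigq2} for a product of $K2^{M}$ factors, multiplied by the normalizing power $N^{(K-1)2^{M}}$; and the term $N^{1/2}\,\expect_\omega\sup_z\norm{\rho_{\omega,z}}_2$ arising in the inductive step is dominated by the same quantity via \eqref{eq:bigq3}.

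\emph{Conclusion.} Specializing to $k=M+2$ yields
\begin{equation*}
\expect_\omega\big(\sup_{z\in S_K}\norm{T_{\omega,z}}_{\text{op}}\big)
\le C_\eps\, N^{\eps}\, N^{-2^{-M-1}}\, p^{-K/2}.
\end{equation*}
If $p\ge N^{-\gamma}$, the right-hand side is at most $C_\eps N^{\eps-2^{-M-1}+K\gamma/2}$, which is $O(N^{-\delta})$ for a suitable $\delta>0$ provided $\gamma<2^{-M}/K$ and $\eps$ is chosen small; any $\delta<2^{-M-1}-\tfrac12 K\gamma$ works, so the theorem holds with, for instance, $\gamma(M,K)=2^{-M-1}/K$. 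I do not anticipate a genuine obstacle: the argument is a bookkeeping extension of the proof of Theorem~\ref{thm:main}, and the single point requiring care is to verify that the repeated squaring of the normalizing power $N^{K-1}$ cancels precisely the extra negative powers of $N$ produced by the growth of the index set, so that the only net change — and the reason $\gamma$ must shrink as $K$ grows — is that each power of $p$ in the bounds of Theorem~\ref{thm:main} becomes its $K$-th power.
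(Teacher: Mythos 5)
Your proposal is correct and is exactly the "same analysis as above" that the paper invokes without elaboration for Theorem~\ref{thm:productofr}: pair against a test function to produce an $(M+2)$-ary scalar form, run the degree-reduction recursion of Lemma~\ref{lemma:CS}, and feed the products of $r$-factors into Lemma~\ref{lemma:secondbigq}, which is already stated for an index set of arbitrary cardinality and an arbitrary shift-tuple. Your bookkeeping is right: the normalizing power $N^{(K-1)\cdot 2^{M+2-k}}$ cancels the surplus $N^{-(K-1)\cdot 2^{M+2-k}}$ produced by the enlarged index set in \eqref{eq:bigq2}, leaving the same $N$-exponents as in the $K=1$ case while every $p$-exponent is multiplied by $K$, which yields the admissible range $\gamma<2^{-M}/K$ (and recovers $\gamma<2^{-M}$ for $K=1$, consistent with Theorem~\ref{thm:main}).
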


\begin{proof}[Proof of Theorem~\ref{thm:Carleson}]
Introduce a function $\xi(x,z)$ so that
\[
|T_{\omega,z}(e_{\xi(x,z)} f,g_1,\cdots,g_M)(x)|
\ge \tfrac12 \sup_\xi
|T_{\omega,z}(e_{\xi} f,g_1,\cdots,g_M)(x)|
\]
for all $x\in\integers$,
where $e_\xi(x,z)$ denotes the function $x\mapsto e^{-i\xi(x,z)}$.
Now 
\begin{multline} \label{byephase}
\norm{T_{\omega,z}(e_{\xi(x,z)} f,g_1,\cdots,g_M)}_2^2
\\
=\sum_w \sum_{x,y} 
\rho(\omega,z)(L(x,y))\rho(\omega,z)(L(x,y)+L(0,w))
\,
e^{-i\xi(x,z)w} f_w(y)
\prod_j g_{j,w}(L_j(x,y))
\end{multline}
with the same notations for $f_w,g_{j,w}$ as in the beginning of the
discussion of Theorem~\ref{thm:main}.
Now in \eqref{byephase}, set $g_{0,w}(x)=e^{-i\xi(x,z)w}$.
Then $\norm{g_{0,w}}_\infty=1$.
Regarding this expression as a multilinear form in $f_w,\{g_{j,w}: 0\le j\le M\}$,
it is in a form to which Theorem~\ref{thm:productofr} applies,
yielding the desired bound.
\end{proof}

\begin{proof}[Sketch of proof of Theorem~\ref{thm:returntimes}]
For $j\in\naturals$
let $N_j$ be the number of indices $k$ such that $n_k\in [1,2^{j+1}]$.
With probability $\ge 1-e^{-c2^{\delta j}}$ for some $c,\delta>0$,
$N_j\asymp 2^{(1-\gamma)j}$.
Moreover, $N_j-N_{j-1}\asymp N_j$, with similarly high probability.

We will sketch the proof of a weaker result, namely:
\begin{equation} \label{eq:RTweaker}
\lim_{j\to\infty}
N_j^{-1} \sum_{k: n_k\in[1,2^{j+1}]}f(\tau^{n_k}(x))g(\sigma^{n_k}(y))
\text{ exists},
\end{equation}
with the same quantifiers as in Theorem~\ref{thm:returntimes};
the only distinction is that we average here only over
initial segments $n_k\in[1,2^j]$, rather than over arbitrary
initial segments $n_k\in[1,N]$.
The stronger conclusion stated in the theorem is proved by modifying the proof below,
as follows: Partition
$(2^{j-1},2^j]$ into subintervals of lengths
$2^{(1-\eta)j}$ for sufficiently small $\eta>0$,
and augment the sequence $(2^j: j\in\naturals)$ in the argument below 
by adjoining all endpoints of these subintervals.
Details are left to the reader.

For $f\in \ell^p=L^p(\integers)$ and $g\in\ell^q$ 
introduce the differences
\[
\Delta^\omega_j(f,g)(x,y) = 
(N_j-N_{j-1})^{-1} \sum_{n_k\in (2^j,2^{-j}]}f(x+n_k)g(y+n_k)
-2^{-j} \sum_{n\in(2^j,2^{j+1}]}f(x+n)g(y+n).
\]
We will show momentarily that 
\begin{equation} \label{eq:RTpfclaim}
\expect_\omega \sup_f \big\| \sup_g \norm{ \Delta_j^\omega(f,g)(x,y) }_{\ell^2_y} \big\|_{\ell^p_x}
\le C2^{-j\delta} \end{equation}
for some $\delta>0$ and $C<\infty$,
where the suprema are taken over all $f,g$ satisfying
$\norm{f}_{\ell^p}\le 1$
and
$\norm{g}_{\ell^2}\le 1$,
respectively.

For $g\in \ell^\infty$, the same bound holds with $\delta$ replaced by $0$.
A corresponding inequality with $\ell^2$ replaced by $\ell^q$ then follows
for all $q\in (2,\infty)$, by interpolation between the endpoints $q=2$ and $q=\infty$.
It then follows by transference that a corresponding maximal inequality
holds for arbitrary dynamical systems $(X,\tau)$ and $(Y,\sigma)$.
This maximal inequality, together with the almost everywhere existence of the
full averages \eqref{eq:RTknown} for $L^\infty$ functions, yields \eqref{eq:RTweaker}.

To establish \eqref{eq:RTpfclaim},
write $g$ in terms of its Fourier transform to represent
\[ \Delta_j^\omega(f,g)(x,y) = c\int_{\xi\in\torus} m_{\omega,j}(\xi,x)
e^{i\xi y}\widehat{g}(\xi) \] 
where $\torus=\reals/2\pi\integers$
and
\[ m_{\omega,j}(\xi,x) = N_j^{-1} \sum_{n_k\in(2^j,2^{j+1}]} f(x+n_k) e^{i\xi n_k}.  \]
Then
\[ \norm{ \Delta_j^\omega(f,g)(x,y)  }_{\ell^2_y} \le C\sup_\xi |m_{\omega,j}(\xi,x)| \]
for every $g$ satisfying $\norm{g}_{\ell^2_y}\le 1$, uniformly for every $\omega$.

Now
\[ \sup_\xi |m_{\omega,j}(\xi,x)|  = |T_{\omega,j}^*(f)(x)|\]
where $T_{\omega,j}^*$ is a maximal function of the type treated in Theorem~\ref{thm:Carleson},
associated to the random set $\{n_k\in (2^j,2^{j+1})$.
Recall that this set was specified using independent random selector variables $s_n(\omega)$,
such that $s_n(\omega)=1$ with probability $\asymp n^{-\gamma}$; for $n\in (2^j,2^{j+1}]$
this probability is $\asymp 2^{-j\gamma}$.
Theorem~\ref{thm:Carleson} therefore applies, and asserts that
$\expect_\omega \norm{T_{\omega,j}^*(f)}_{\ell^p_x} \lesssim 2^{-j\delta}$ for a certain $\delta>0$.
\end{proof}

\section{A Variant}

In this section we discuss the variant in which the random variables
are independent for distinct values of $(x,y)$, rather than depending only on
some scalar-valued linear functional $L(x,y)$.
Consider jointly independent random selector variables $s_\omega(x,y)$
for $(x,y)\in[-N,\cdots,N]^2$, satisfying $s_\omega(x,y)=1$ with probability
$p$, and $=0$ otherwise.
Then $\expect(\sum_x s_\omega(x,y))\asymp Np$
and $\expect(\sum_y s_\omega(x,y))\asymp Np$.
Define $r_\omega(x,y)=(Np)^{-1}\big(s_\omega(x,y)-p)$
so that
$\expect_\omega r_\omega(x,y) =0$.
Let $\scriptt_\omega$ be the associated multilinear operators.
We will sometimes write $\scriptt^{(M)}_\omega$ to indicate the degree of
multilinearity of $\scriptt_\omega$.

The factor $(Np)^{-1}$ in the definition of $r_\omega$ 
represents the natural normalization, so that
the expected value of the norm of the linear operator 
$f\mapsto \sum_y |r_\omega(x,y)| f(y)$, on $\lt([-N,N])$,
is uniformly bounded. More precisely:

\begin{lemma} \label{lemma:logloss}
For any $M\ge 2$ $A<\infty$, $\gamma_0\in[0,1)$, 
and family $\{L_j\}$ satisfying
the hypotheses of Theorem~\ref{thm:main},
there exists $C<\infty$ such that
for any $\gamma\in[0,\gamma_0]$ and
any index $i\in\{1,2,\cdots,M\}$,
for any $N\ge 1$,
\[
\expect_\omega\Big(
[
\sup_{f_1,\cdots,f_M}
\sum_{(x,y)\in [-AN,AN]^2}
|r_\omega(x,y)|\prod_{j=1}^M |f_j(L_j(x,y))| 
]^2
\Big)
\le C_{A,\gamma} \log(N)^2,
\]
where the supremum is taken over all functions satisfying
$\norm{f_i}_1\le 1$
and $\norm{f_j}_\infty\le 1$ for all $j\ne i$.
\end{lemma}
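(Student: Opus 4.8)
The plan is to reduce the claimed estimate to a statement about the $\ell^1 \to \ell^\infty$-type operator norm of the random matrix $(|r_\omega(x,y)|)_{x,y}$, together with a counting argument on lattice points. First I would observe that, just as in the reduction at the beginning of \S5, one may change variables linearly so that $L_i(x,y) = x$ (the index with the $\ell^1$ constraint) and one of the other $L_j$, say $L_1$ if $i \ne 1$ or $L_2$ if $i = 1$, equals $y$; write the remaining functionals as $L_j(x,y)$. Pulling out $\norm{f_i}_1 \le 1$ and all the $\norm{f_j}_\infty \le 1$ for $j \ne i, i'$ (where $i'$ is the index sent to $y$), the quantity inside the expectation is bounded by
\[
\sup_x \sum_y |r_\omega(x,y)|\, |f_{i'}(y)| \le \sup_x \sup_y |f_{i'}(y)| \cdot \#\{y : r_\omega(x,y) \ne 0\} \cdot (Np)^{-1}(1+p),
\]
but this is too crude since it ignores that $f_{i'}$ is only $\ell^\infty$-bounded, not $\ell^1$; so instead I would keep the $\ell^2_y$ structure implicit and bound by $\sup_x \big( \sum_y |r_\omega(x,y)| \big)$, using $\norm{f_{i'}}_\infty \le 1$ and $\norm{f_i}_1 \le 1$ directly. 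Thus the whole supremum is at most $C (Np)^{-1}\sup_x \big(\sum_{|y|\le AN} s_\omega(x,y) + (2AN+1)p\big)$.

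The second step is a large-deviations estimate: for each fixed $x$, $\sum_{|y| \le AN} s_\omega(x,y)$ is a sum of $O(N)$ i.i.d.\ Bernoulli($p$) variables with mean $\asymp Np$, so a Chernoff bound gives $\prob\big(\sum_y s_\omega(x,y) > \lambda Np\big) \le e^{-c\lambda Np}$ for $\lambda \ge 2$, say, once $Np \ge 1$ (which holds since $p \ge N^{-\gamma}$ with $\gamma < 1$, for $N$ large; small $N$ are absorbed into $C$). Taking a union bound over the $O(N)$ values of $x$, the maximum $\max_x \sum_y s_\omega(x,y)$ exceeds $\lambda Np$ with probability $\le C N e^{-c\lambda Np}$. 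Since $Np \ge N^{1-\gamma} \to \infty$, choosing $\lambda \asymp \log N / (Np)$ (which is $\ge 2$ only when $Np \lesssim \log N$, and otherwise $\lambda = 2$ suffices) one gets that $\max_x \sum_y s_\omega(x,y) \lesssim Np + \log N$ outside an event of probability $\le N^{-10}$, say. On that exceptional event one uses the deterministic bound $\max_x \sum_y s_\omega(x,y) \le 2AN+1$, contributing $N^{-10} \cdot (Np)^{-2}(2AN+1)^2 = O(1)$ to the expectation of the square. Combining, $\expect_\omega \big[ (\sup \cdots)^2 \big] \lesssim (Np)^{-2}(Np + \log N)^2 + O(1) \lesssim 1 + (\log N)^2/(Np)^2 \lesssim (\log N)^2$, since $Np \ge 1$.

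I would organize the write-up so that the linear change of variables and the reduction to $\max_x \sum_y s_\omega(x,y)$ occupy the first paragraph, the Chernoff/union-bound argument the second, and the assembly of the $L^2$ expectation the third. The main obstacle is not any single step — each is routine — but rather being careful that the change of variables introduces only a controlled multiplicative constant (depending on $A$ and the $L_j$, as the statement allows), that the factor $(1+p) \le 2$ and $p \le 1$ are used correctly in the normalization $(Np)^{-1}(s_\omega - p)$, and that the union bound over $x$ only costs a factor polynomial in $N$, which is killed by the exponential tail because $Np \to \infty$ in the allowed range $\gamma < 1$. The $\log N$ loss is genuine and comes precisely from the regime $Np \asymp 1$ (i.e.\ $p \asymp N^{-1}$, below the stated range but at its boundary), where the maximum of $O(N)$ Poisson-type variables with mean $O(1)$ is of size $\asymp \log N / \log\log N$; one does not attempt to remove it.
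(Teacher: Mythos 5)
Your proposal is correct and follows essentially the same route as the paper's own (very terse) proof: change variables so that $L_i(x,y)=x$, use $\norm{f_i}_1\le 1$ and $\norm{f_j}_\infty\le 1$ for $j\ne i$ to bound the supremum by $\sup_x\sum_y|r_\omega(x,y)|$, and then control this maximum by a Chernoff bound together with a union bound over the $O(N)$ values of $x$. The only cosmetic difference is that you initially also normalize a second functional to $y$, which you then correctly recognize is unnecessary. Your remark that the $(\log N)^2$ loss is only genuinely needed in the borderline regime $Np\asymp 1$ (so that for $\gamma<1$ the true bound is actually $O(1)$) is accurate and a nice observation, but consistent with the paper's stated (non-sharp) estimate.
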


\begin{proof}[Sketch of proof]
At the expense of a factor depending on $\{L_j\}$,
we may change variables so that $L_i(x,y)=x$. Then
\[
\sum_y 
|r_\omega(x,y)|\prod_{j=1}^M |f_j(L_j(x,y))| 
\le |f_i(x)|\sum_y |r_\omega(x,y)|.
\]
An application of Chernoff's inequality (see below for a similar argument) yields
\[
\expect_\omega  \sup_{x\in[-AN,AN]}
\sum_y |r_\omega(x,y)| 
\le C\log(2+AN),
\]
and the same for the expectation of the square.
\end{proof}

$\expect_\omega  \big(\sum_y |r_\omega(x,y)|\big) $ 
is also bounded below by a strictly positive constant,
independent of $x$. Since the random variables
$\sum_y |r_\omega(x,y)|$  are jointly independent,
it is easily seen that
$\expect_\omega  \big(\sup_x\sum_y |r_\omega(x,y)|\big) $ 
is not uniformly bounded as $N\to\infty$.

Theorem~\ref{thm:morerandom} will be proved
by induction on the degree $M$ of multilinearity.
The following base result will be proved later.
\begin{lemma} \label{lemma:morerandombasecase}
For any $\eps>0$,
$\expect_\omega\norm{\scriptt_\omega^{(2)}}_{\text{op}}
\le C_\eps N^\eps N^{-(1-\gamma)/2}$.
\end{lemma}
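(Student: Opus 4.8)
The plan is to bound the operator norm by a trace moment, as in Proposition~\ref{prop:easier}, but now treating $\scriptt_\omega^{(2)}$ as a bilinear form and expanding an even power. After a rational change of variables we may assume $\scriptt_\omega^{(2)}(f_1,f_2)=\sum_{x,y}r_\omega(x,y)f_1(x)f_2(L(x,y))$ for a single linear functional $L$ which is a scalar multiple of neither coordinate; its matrix $K_\omega(x,z)=\sum_{y: L(x,y)=z}r_\omega(x,y)$ then has entries that are sums of boundedly many of the independent variables $r_\omega(x,y)$, and distinct entries involve disjoint sets of the $r_\omega(x,y)$. So it suffices to show that for each fixed even $q$,
\[
\expect_\omega\trace\big[(K_\omega^*K_\omega)^q\big]
\le C_q N^{\eps} \big(N^{-(1-\gamma)/2}\big)^{2q}
\]
for every $\eps>0$ (absorbing the $N^\eps$), and then take $q\to\infty$ using $\expect_\omega\norm{K_\omega}_{\text{op}}\le(\expect_\omega\trace[(K_\omega^*K_\omega)^q])^{1/2q}$.

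First I would expand the trace as $\sum r_\omega$-monomials indexed by closed walks $x_1\to z_1\to x_2\to\cdots\to x_1$ of length $2q$, and observe that, by the same independence/admissibility argument as in Proposition~\ref{prop:easier}, the expectation of a monomial vanishes unless every independent variable $r_\omega(x,y)$ occurring in it occurs at least twice. Here, however, the variables attached to a single matrix entry $K_\omega(x,z)$ already come in a bounded bundle; what matters is that each underlying $s_\omega(x,y)$ appears at least twice across the whole monomial. Since $\expect|r_\omega(x,y)|^m\le C_m(Np)^{-m}p$ for $m\ge1$, a monomial in which the underlying variables take $\kappa$ distinct values and the walk visits the corresponding $(x,y)$-pairs contributes at most $C_q(Np)^{-2q}p^{\kappa}$ (crudely bounding the number of $r_\omega$-factors by $2q$ times a constant). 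Next I would count: the number of closed walks of length $2q$ whose multiset of underlying $(x,y)$-pairs realizes exactly $\kappa$ distinct pairs, \emph{each used at least twice}, is $O(N^{\kappa+1})$ — the ``$+1$'' is the usual free choice of a starting vertex, and $\kappa\le q$ because each distinct pair is used at least twice. Hence the contribution of this stratum is $O(N^{\kappa+1}(Np)^{-2q}p^{\kappa})=O(N^{\kappa+1-2q}p^{\kappa-2q})=O\big(N\cdot(Np^2)^{\kappa-2q}\cdot N^{-2q}\big)$, wait — more cleanly: $=O\big(N^{\kappa+1-2q}\,p^{\kappa-2q}\big)$, and since $p\ge N^{-\gamma}$ and $\kappa-2q\le -q<0$ this is maximized at $\kappa=q$, giving $O\big(N^{1-q}p^{-q}\big)\le O\big(N^{1-q}N^{\gamma q}\big)=O\big(N\cdot N^{-q(1-\gamma)}\big)=O\big(N\cdot(N^{-(1-\gamma)/2})^{2q}\big)$. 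Summing over the finitely many strata $\kappa\in\{1,\dots,q\}$ and over the finitely many combinatorial ``shapes'' of the walk preserves this bound.

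The main obstacle I anticipate is the combinatorial counting step — carefully verifying that a closed walk of length $2q$ in the bipartite graph associated to $L$, constrained so that every traversed $(x,y)$-pair is used at least twice, is determined by at most $q$ ``free'' pairs plus one basepoint, i.e.\ that there are only $O(N^{q+1})$ of them. This is the analogue of the bound ``$\le C_q N^{K+1}$'' in the proof of Proposition~\ref{prop:easier}, but one must check that the geometry of $L$ (injectivity of $y\mapsto L(x,y)$ for fixed $x$, and $L$ not a multiple of a coordinate) guarantees that once the multiset of used pairs and their repetition pattern is fixed, the vertices of the walk are pinned down up to the single basepoint ambiguity; handling walks that use some pair three or more times (where the crude count $\kappa\le q$ still holds but the bookkeeping is fussier) is where care is needed. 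Once that count is in hand, the $\eps$-loss is simply the price of optimizing $q\sim 1/\eps$, exactly as in Lemma~\ref{lemma:secondbigq}.
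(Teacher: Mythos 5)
Your overall strategy matches the paper's: bound $\norm{\scriptt_\omega^{(2)}}_{\text{op}}$ by an even trace moment $\expect_\omega\trace[(K_\omega^*K_\omega)^q]$, expand into closed walks of length $2q$, kill any walk in which some underlying $(x,y)$-pair occurs exactly once (by joint independence and $\expect_\omega r_\omega=0$), stratify the surviving walks by the number $\kappa$ of distinct traversed pairs, and let $q\to\infty$. The moment bookkeeping you give is correct: with $\expect_\omega|r_\omega(x,y)|^m\le C(Np)^{-m}p$ for $m\ge 2$ and $\kappa\le q$, the stratum contributes $O(N^{\kappa+1}(Np)^{-2q}p^\kappa)$, which is largest at $\kappa=q$ and gives $O(N\,(Np)^{-q})$; the $(2q)$-th root then yields $C_q N^{1/2q}(Np)^{-1/2}\le C_\eps N^\eps N^{-(1-\gamma)/2}$ once $q$ is taken large enough. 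Your preliminary reduction to a kernel $K_\omega(x,z)$ whose entries are built from disjoint bundles of the independent $r_\omega(x,y)$'s is also a legitimate substitute for the paper's (more terse) identification $K_\omega(x,y)=(Np)r_\omega(x,y)$ after a bijective rational change of variables.

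The genuine gap is exactly the one you flag: the assertion that the number of closed walks of length $2q$ whose traversed pairs realize exactly $\kappa$ distinct values, each used at least twice, is $O(N^{\kappa+1})$. That claim is precisely Lemma~\ref{lemma:doti} in the paper, and it carries the combinatorial weight of the whole base case, so it cannot be left as an anticipated obstacle. The paper's proof attaches to each walk the equivalence relation on the traversed pairs $z_k=(x_i,y_j)$ given by equality, and runs an induction over equivalence classes: the first class costs two free coordinates (one of $x$-type and one of $y$-type), and each further class costs only one new free coordinate, because the cyclic adjacency $z_k\sim z_{k+1}$ (consecutive $z_k$ share an $x$- or a $y$-coordinate) guarantees that the new class already shares one coordinate with an earlier class. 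Thus at most $J+1$ coordinates are free for $J=\kappa$ classes, giving $N^{\kappa+1}$ walks per relation, and there are only $O_q(1)$ relations. Incidentally, your worry about pairs visited three or more times is a non-issue in this formulation: the equivalence-class count is agnostic to the multiplicities $m_j$ so long as all $m_j\ge 2$. You need to supply such a free/bound-coordinate (equivalently, spanning-tree) argument to make the proof complete.
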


We turn to the proof of Theorem~\ref{thm:morerandom}.
Let $M\ge 3$, and $\gamma\in[0,1)$.
By a simple interpolation,
it suffices to prove the inequality under the assumption
that each function $f_j$ equals the characteristic function
$\chi_{E_j}$ of a set $E_j\subset[-N,N]$.
We will simplify notation by writing
$\scriptt_\omega(E_1,\cdots,E_M)$ for $ \scriptt_\omega(\chi_{E_1},\cdots,\chi_{E_M})$.
Introduce the restricted weak type norm
\begin{equation}
\norm{\scriptt}_{\text{weak}}
= \sup_{E_1,\cdots,E_M}|E_1|^{-1/2}|E_2|^{-1/2}
|\scriptt(E_1,\cdots,E_M)|.
\end{equation}

Suppose now that the theorem has been proved for $M-1$.
Therefore for $E_M=[-N,N]$, 
\begin{equation} \label{inductionstep}
\expect_\omega \sup_{\{E_1,\cdots,E_{M-1}\}} |E_1|^{-1/2}|E_2|^{-1/2}
\big|\scriptt_\omega(E_1,\cdots,E_{M-1},[-N,N])\big|
\le CN^{\eps-(1-\gamma)/2}
\end{equation}
where $C$ depends on $\eps,M,\gamma$.

\begin{lemma} \label{lemma:split}
For any $\eta\in(0,1)$ and for any $\omega$,
\begin{equation}
\norm{\scriptt^{(M)}_\omega}_{\text{weak}}
\le C N^{-\eta/2}
+
\norm{\scriptt^{(M-1)}_\omega}_{\text{op}}
+ 
\textstyle{\sup^*_{\{E_m\}}}
|E_1|^{-1/2}
|E_2|^{-1/2}
\big|
\scriptt_\omega^{(M)}(E_1,\cdots,E_M)
\big|
\end{equation}
where $\sup^*_{\{E_m\}}$ denotes
the supremum over all $M$-tuples of sets $E_j$ satisfying 
\[
|E_1|\cdot|E_2|\ge N^{2-\eta}.
\]
\end{lemma}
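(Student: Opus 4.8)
The plan is to split the supremum defining $\norm{\scriptt^{(M)}_\omega}_{\text{weak}}$ according to the size of $|E_1|\,|E_2|$. Tuples $(E_1,\dots,E_M)$ with $|E_1|\,|E_2|\ge N^{2-\eta}$ contribute, by definition, exactly the third term on the right-hand side, so the whole task is to show that for every tuple with $|E_1|\,|E_2|<N^{2-\eta}$ one has
\[
|E_1|^{-1/2}|E_2|^{-1/2}\big|\scriptt^{(M)}_\omega(E_1,\dots,E_M)\big|\le CN^{-\eta/2}+\norm{\scriptt^{(M-1)}_\omega}_{\text{op}}.
\]
Since the weight $|E_1|^{-1/2}|E_2|^{-1/2}$ and the standing hypotheses on $\{L_j\}$ are symmetric in $L_1$ and $L_2$, I may assume $|E_1|\le|E_2|$, so $|E_1|\le N^{1-\eta/2}$; and, as in the reduction preceding Lemma~\ref{lemma:CS}, I make a rational change of variables so that $L_1(x,y)=x$, $L_2(x,y)=y$.

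Next I apply Cauchy–Schwarz in $x$, pulling out the factor $\chi_{E_1}(x)$ with its $\ell^2$ norm $\norm{\chi_{E_1}}_2^2=|E_1|$, exactly as in Lemma~\ref{lemma:CS}, and then substitute $y'=y+z$. Writing $h_z$ for the resulting products of truncated indicator functions — products $\chi_{E_j}(u)\,\chi_{E_j}(u+L_j(0,z))$, again indicators of $M-1$ sets composed with the $M-1$ functionals $L_2,\dots,L_M$ — this yields
\[
\big|\scriptt^{(M)}_\omega(E_1,\dots,E_M)\big|^2\le|E_1|\,\big(\Sigma_{\mathrm{diag}}+\Sigma_{\mathrm{off}}\big),
\]
with $\Sigma_{\mathrm{diag}}=\sum_{x,y}r_\omega(x,y)^2\,\chi_{E_2}(y)\prod_{j\ge3}\chi_{E_j}(L_j(x,y))$ and $\Sigma_{\mathrm{off}}=\sum_{z\ne0}\sum_{x,y}r_\omega(x,y)\,r_\omega(x,y+z)\,h_z(x,y)$. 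Dividing by $|E_1|\,|E_2|$ and using $\sqrt{a+b}\le\sqrt a+\sqrt b$, it remains to prove $|E_2|^{-1}\Sigma_{\mathrm{diag}}\le CN^{-\eta}$ and $|E_2|^{-1}\Sigma_{\mathrm{off}}\le C\,\norm{\scriptt^{(M-1)}_\omega}_{\text{op}}^2$.

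The diagonal term is elementary. From the pointwise inequality $r_\omega(x,y)^2\le(Np)^{-1}|r_\omega(x,y)|$ and a Chernoff estimate for $\sum_x s_\omega(x,y)$ — with a union bound over the $2N+1$ values of $y$ only, so the exceptional $\omega$-set has probability $o(1)$ (this is the estimate behind Lemma~\ref{lemma:logloss}) — one obtains $\sum_x r_\omega(x,y)^2\lesssim(Np)^{-1}$ for every $y$, hence $\Sigma_{\mathrm{diag}}\le|E_2|\sup_y\sum_x r_\omega(x,y)^2\lesssim|E_2|(Np)^{-1}$. Since $p\ge N^{-\gamma}$ this gives $|E_2|^{-1}\Sigma_{\mathrm{diag}}\lesssim(Np)^{-1}\le N^{-(1-\gamma)}\le CN^{-\eta}$; this is the origin of the first term, and where the range $\eta\le1-\gamma$ relevant to later applications enters.

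The off-diagonal term $\Sigma_{\mathrm{off}}$ is the crux and the step I expect to be the main obstacle. The difficulty is that $r_\omega(x,y)\,r_\omega(x,y+z)$ is a product of two distinct entries of the random matrix rather than a single random weight evaluated at a linear functional, so neither the Fourier-analytic approach nor a direct re-use of the $(M-1)$-linear operator norm applies verbatim. The plan is to exploit, for each fixed $z\ne0$, that the two factors $r_\omega(x,y)$ and $r_\omega(x,y+z)$ are independent and mean zero, and — by a further Cauchy–Schwarz, treating one of them together with the indicator product $h_z$ as an admissible, appropriately normalized test function attached to the functional $L_2$ — to bound each slice by $\norm{\scriptt^{(M-1)}_\omega}_{\text{op}}^2$ times the cardinality of the relevant index set; summing over the $O(N)$ values of $z$ and using $|E_1|\,|E_2|<N^{2-\eta}$ to absorb the surplus powers of $N$ then gives $|E_2|^{-1}\Sigma_{\mathrm{off}}\le C\,\norm{\scriptt^{(M-1)}_\omega}_{\text{op}}^2$. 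The reason for routing the argument through $\scriptt^{(M-1)}_\omega$, rather than a direct large-deviation bound for each tuple $(E_1,\dots,E_M)$, is that $\norm{\scriptt^{(M-1)}_\omega}_{\text{op}}$ is controlled in expectation by the inductive hypothesis and so avoids a union bound over the exponentially many tuples of sets $E_j$ — which is exactly the obstruction that forces the restriction to small $|E_1|\,|E_2|$ and the appearance of the lower-degree operator norm in the statement.
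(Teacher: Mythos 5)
Your decomposition into diagonal and off-diagonal parts is not the paper's argument, and the step you flag as ``the crux'' is a genuine gap, not a fillable detail. After the $y'=y+z$ substitution, $\Sigma_{\mathrm{off}}$ involves the weight $r_\omega(x,y)\,r_\omega(x,y+z)$, which for the model of Theorem~\ref{thm:morerandom} is a product of two \emph{distinct, independent} matrix entries. Unlike the situation in Theorem~\ref{thm:main}, where $r(\omega,L_0(x,y))\,r(\omega,L_0(x,y)+L_0(0,z))=\rho^z(L_0(x,y))$ is again a single-variable function evaluated at $L_0(x,y)$ (so the $TT^*$ step stays inside the same class of forms), here $r_\omega(x,y)\,r_\omega(x,y+z)$ is an irreducibly two-variable random weight. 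It cannot be packaged as a test function $g(L_2(x,y))=g(y)$ because it depends on $x$ as well as $y$, and it is not of the shape $r_\omega(x,y)\prod_j g_j(L_j(x,y))$ that defines $\scriptt^{(M-1)}_\omega$ (which has a \emph{single} factor of $r_\omega$). So there is no route from a fixed-$z$ slice of $\Sigma_{\mathrm{off}}$ to $\norm{\scriptt^{(M-1)}_\omega}_{\text{op}}$. This is exactly the obstacle the paper points to when it says the full-range result for this random model ``proceeds along quite different lines.'' A secondary issue: your bound on $\Sigma_{\mathrm{diag}}$ uses a Chernoff estimate and therefore holds only outside an exceptional $\omega$-set, whereas the lemma is a pointwise-in-$\omega$ inequality; and as you note your argument would in any case only reach $\eta\le 1-\gamma$.

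The paper proves the lemma by an entirely different and much more elementary mechanism, using positivity rather than orthogonality. It writes $\scriptt_\omega$ as a difference of two \emph{nonnegative, monotone} forms: the deterministic averaging form $\scripta(f_1,\dots,f_M)=N^{-1}\sum_{x,y}\prod_j f_j(L_j(x,y))$ and the selection form $(Np)^{-1}\sum_{x,y}s_\omega(x,y)\prod_j f_j(L_j(x,y))$. Monotonicity in each nonnegative argument then gives, upon enlarging $f_j$ to $\one$,
\[
|\scriptt_\omega(E_1,\dots,E_M)|\le \scriptt_\omega(|f_1|,\dots,|f_{M-1}|,\one)+2\,\scripta(E_1,E_2,\one,\dots,\one).
\]
The first term is (up to the bookkeeping of the domain) $\scriptt^{(M-1)}_\omega$ evaluated at $(E_1,\dots,E_{M-1})$, hence $\le |E_1|^{1/2}|E_2|^{1/2}\norm{\scriptt^{(M-1)}_\omega}_{\text{op}}$. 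The second term is controlled by the trivial counting bound $\scripta(E_1,E_2,\one,\dots,\one)\le CN^{-1}|E_1|\,|E_2|$, which after dividing by $|E_1|^{1/2}|E_2|^{1/2}$ gives $CN^{-1}|E_1|^{1/2}|E_2|^{1/2}\le CN^{-\eta/2}$ precisely in the regime $|E_1|\,|E_2|<N^{2-\eta}$. This is pointwise in $\omega$, requires no Fourier or $TT^*$ step, and is where both the $N^{-\eta/2}$ term and the $\norm{\scriptt^{(M-1)}_\omega}_{\text{op}}$ term in the statement originate. Your instinct to route through $\scriptt^{(M-1)}_\omega$ to avoid a union bound over tuples of sets is the right one; the realization you are missing is that monotonicity makes this reduction available directly, without any Cauchy--Schwarz at all.
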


\begin{proof}
Denote by $\one$ the constant function $\one(x)=1$ for all $x\in [-N,N]$.
Define the nonrandom averaging forms
\[\scripta(f_1,\cdots, f_M) = N^{-1}\sum_{x,y}\prod_{j=1}^M f_j(L_j(x,y)).\]
As for $\scriptt$, write 
$\scripta(E_1,\cdots,E_M)$
when each $f_j$ is the characteristic function of a set $E_j$.
Then
\[
|\scripta(E_1,\cdots,E_M)|\le C
|E_1|^{1/2}|E_2|^{1/2}
= \norm{f_1}_2\norm{f_2}_2\prod_{k>2}\norm{f_k}_\infty,
\]
but the trivial bound
\begin{equation}\label{eq:improvement}
\scripta(E_1,\cdots,E_M)\le
\scripta(E_1,E_2,\one,\cdots,\one) \le C N^{-1}|E_1|\cdot|E_2|
=C(|E_1|^{1/2}|E_2|^{1/2}/N)\cdot|E_1|^{1/2}|E_2|^{1/2}
\end{equation}
expresses a significant improvement unless $|E_1|\cdot|E_2|\asymp N$.

$\scripta(f_1,\cdots,f_M)$
never decreases if all functions are replaced by their absolute values;
nor does it decrease if some $f_j$ increases, provided that all
$f_i$ are nonnegative.
The same holds for
$(\scriptt_\omega-\scripta)(f_1,\cdots,f_M)
=(Np)^{-1}\sum_{x,y} s_\omega(x,y) \prod_j f_j(L_j(x,y))$.
Therefore if $\norm{f_j}_\infty\le 1$ for all $j\notin\{1,2\}$,
then
\begin{align*}
|\scriptt_\omega(f_1,\cdots,f_M)|
&\le |\scripta(f_1,\cdots,f_M)|
+ |(\scriptt_\omega-\scripta)(f_1,\cdots,f_M)|
\\
&\le
\scripta(|f_1|,\cdots,|f_M|)
+ (\scriptt_\omega-\scripta)(|f_1|,|f_2|,\one,\cdots,\one)
\\
&\le
2 \scripta(|f_1|,|f_2|,\one,\cdots,\one)
+ \scriptt_\omega(|f_1|,|f_2|,\one,\cdots,\one).
\end{align*}

Write $\one$ to denote  the characteristic function of $[-N,N]$,
as well as this set itself.
Let $\rho<(1-\gamma)/2$. 
Then by induction,
\[
\expect_\omega
\sup_{\{E_m: m\le M-1\}}
|E_1|^{-1/2}|E_2|^{-1/2}
|\scriptt_\omega(E_1,\cdots,E_{M-1},\one)|
\le CN^{-\rho}.
\]
Therefore by replacing $E_M$ by its complement $[-AN,AN]\setminus E_M$ if necessary,
we may assume without loss of generality that
\begin{equation} \label{eq:EMlarge}
\sum_{x,y} \prod_{j=1}^M \chi_{E_j}(L_j(x,y))
\ge \tfrac12
\sum_{x,y} \prod_{j=1}^{M-1} \chi_{E_j}(L_j(x,y)).
\end{equation}

Applying this argument to the indices $m=M-1,M-2,\dots$
in sequence, we reduce to the case where
the set 
\begin{equation}
\scripte=\{(x,y): L_j(x,y)\in E_j
\text{ for all } j\in[1,M]\}\subset E_1\times E_2
\end{equation}
satisfies
\begin{equation} \label{eq:scriptelowerbounds}
|E_1|\cdot|E_2|\le 2^M|\scripte|.
\end{equation}
\end{proof}

For any set $\scripte$ consider the random variable
\begin{equation}
X_\scripte(\omega)= Np\sum_{(x,y)\in\scripte} r_\omega(x,y)
=\sum_{(x,y)\in\scripte} \big(s_\omega(x,y)-p\big).
\end{equation}
$\expect_\omega X_\scripte(\omega)=0$.
The summands $s_\omega(x,y)-p$ are jointly independent, with values in $[-1,1]$.
$X_\scripte$ has standard deviation 
$\sigma \asymp  p^{1/2}|\scripte|^{1/2}$,
with implicit constants depending on $\gamma_0$ but not on $N$.

Chernoff's inequality \cite{taovu} asserts that
$\prob\big({|X_\scripte(\omega)|>\lambda\sigma}\big)
\le Ce^{-c\min(\lambda^2,\lambda\sigma)}$.
Set 
\begin{equation}\lambda = 
Np\cdot N^{-\rho}|\scripte|^{1/2}\sigma^{-1}
\asymp N^{1-\rho}p^{1/2}
\asymp N^{1-\rho-\gamma/2}.
\end{equation}
Then 
\begin{equation*}
\min(\lambda^2,\lambda\sigma)
= \min(N^{2-2\rho-\gamma},N^{1-\rho-\gamma}|\scripte|^{1/2})
\ge c\min(N^{2-2\rho-\gamma},N^{2-\rho-\gamma-\nu/2}).
\end{equation*}
Moreover
\[
(Np)^{-1}\lambda\sigma 
= N^{-\rho}|\scripte|^{1/2}
\le C_M N^{-\rho}|E_1|^{1/2}|E_2|^{1/2}.
\]

Consider the exceptional event
\[
\Omega^*_M(\scripte)=\{\omega\in\Omega:
|X_\scripte(\omega)|>\lambda\sigma\}.
\]
By the definition of $\lambda$, 
\begin{equation} \label{eq:pointwisebound}
|\scriptt_\omega(E_1,\cdots,E_M)| \le N^{-\rho}|E_1|^{1/2}|E_2|^{1/2}
\text{ for all $\omega\notin\Omega^*_M(\scripte)$.}
\end{equation}

Choose $\nu=2\rho$. Since $\rho<(1-\gamma)/2$,  we conclude that
\begin{equation}
\prob(\Omega^*_M(\scripte)) \le Ce^{-cN^{1+\delta}}
\end{equation}
for some $\delta>0$.

Define 
\[
\Omega^*_{M} = \Omega^*_{M-1}\bigcup \cup_\scripte \Omega^*_M(\scripte). 
\]
The total number of sets $\scripte$, of all cardinalities,
is at most $2^{CMN}$, because $\scripte$ is uniquely determined by
$E_1\times \cdots\times E_M$. 
So
\[
\prob(\cup_\scripte \Omega^*_M(\scripte))\le 
C2^{CMN}e^{-cN^{1+\delta}},
\]
and consequently $\prob(\Omega^*_M)\le CN^{-\delta}$ for another $\delta>0$.
It follows from Lemma~\ref{lemma:logloss} and H\"older's inequality
that
\begin{equation} \label{eq:exceptionalsetexpectation}
\int_{\Omega^*_M} \norm{\scriptt_\omega^{(M)}}_{\text{op}}\,d\omega \le Ce^{-cN^{1+\delta}}
\end{equation}
for some $C,c,\delta\in\reals^+$.
Since $\eta=2\rho$, \eqref{eq:exceptionalsetexpectation},
and \eqref{eq:pointwisebound}, and Lemma~\ref{lemma:split}
together give $\expect_\omega(\norm{\scriptt_\omega^{(M)}}_{\text{weak}})
\le CN^{-\rho}$.
This completes the inductive step.
\qed

\begin{proof}[Proof of Lemma~\ref{lemma:morerandombasecase}]
For any linear operator $Tf(x) = \sum_y K(x,y)f(y)$,
\begin{align*}
\norm{T}_{\text{op}}^{2K}
&\le \trace((T^*T)^K)
\\
&=
\sum
K(x_1,y_1)K(x_2,y_1)K(x_2,y_2)K(x_3,y_2)\cdots K(x_{K},y_{K})K(x_1,y_{K}).
\end{align*}
where the sum is taken over all $2K$-tuples
$(x_1,y_1,\cdots,x_{K},y_{K})$.
Apply this with $K(x,y)=K_\omega(x,y)=(Np)r_\omega(x,y)$.
Fix
$(x_1,y_1,x_2,y_2,\cdots,x_{K},y_{K})$.
Define the multiplicity of $(s,t)$
to be the number of factors $K_\omega(x_i,y_j)$
in this product for which $(x_i,y_j)=(s,t)$;
here $j=i$, or $j=i-1$, or $j=K-1$ and $i=1$.

To 
$(x_1,y_1,x_2,y_2,\cdots,x_{K},y_{K})$
is associated a nonincreasing partition of $2K$, namely the ordered tuple
of all nonzero multiplicities of elements $(s,t)$ of $\integers^2$,
written in nonincreasing order.
We denote such a partition  by
$(m_1,\cdots,m_J)$, 
where $\sum_{j=1}^J m_j=2K$.

The expectation of
$K_\omega(x_1,y_1)K_\omega(x_2,y_1)K_\omega(x_2,y_2) \cdots K_\omega(x_1,y_{K})$
vanishes unless no $(s,t)\in [1,\cdots,N]^2$ has multiplicity equal to one.
Therefore only partitions with all $m_j\ge 2$ contribute to the expectation.
The number $J$ of summands $m_j$ is then $\le K$.

\begin{lemma} \label{lemma:doti}
The number of points
$(x_1,y_1,x_2,y_2,\cdots,x_{K},y_{K})\in[1,N]^{2K}$
which give rise to any particular partition
$(m_1,\cdots,m_J)$ is
$\le  C_K N^{J+1} $.
\end{lemma}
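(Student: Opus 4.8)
The plan is to realize each $2K$-tuple $(x_1,y_1,\dots,x_K,y_K)$ as a labelling of a fixed cycle graph, and to read the partition off as a pattern of coincidences among edge-labels. Let $\Gamma$ be the cycle on the $2K$ ordered symbols $x_1,y_1,x_2,y_2,\dots,x_K,y_K$ (and back to $x_1$); its $2K$ edges $\{x_1,y_1\},\{y_1,x_2\},\{x_2,y_2\},\dots,\{x_K,y_K\},\{y_K,x_1\}$ correspond bijectively to the $2K$ pairs $(x_1,y_1),(x_2,y_1),(x_2,y_2),\dots,(x_K,y_K),(x_1,y_K)$ occurring as arguments of $K_\omega$ in the trace expansion. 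Note that $\Gamma$ is bipartite with parts $\{x_i\}$ and $\{y_j\}$ and that every edge joins an $x$-vertex to a $y$-vertex; a $2K$-tuple in $[1,N]^{2K}$ is the same thing as an assignment of an element of $[1,N]$ to each vertex of $\Gamma$, and the element of $\integers^2$ attached to an edge is the pair (value at its $x$-endpoint, value at its $y$-endpoint).

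Given such an assignment, call two edges of $\Gamma$ equivalent when the corresponding elements of $\integers^2$ coincide; this yields an equivalence relation $\sim$ on the edge set with $J$ classes of sizes $m_1,\dots,m_J$. The tuples giving rise to the partition $(m_1,\dots,m_J)$ are thereby sorted according to the relation $\sim$ they induce, and the number of equivalence relations on a $2K$-element set with $J$ classes is at most a constant $C_K$ depending only on $K$. It thus suffices to show that, for each fixed $\sim$ with $J$ classes, the number of assignments whose induced edge-coincidence relation contains $\sim$ is at most $N^{J+1}$; over-counting those whose coincidence pattern merely contains $\sim$ (rather than equals it) is harmless for an upper bound.

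So fix $\sim$. An assignment has coincidence relation containing $\sim$ precisely when, for every pair $e\sim e'$, the two $x$-endpoints receive equal values and the two $y$-endpoints receive equal values. Let $\approx$ be the smallest equivalence relation on the $2K$ vertices of $\Gamma$ generated by these endpoint identifications; then the admissible assignments are exactly those constant on each $\approx$-class, so their number is $N^{c}$, where $c$ is the number of $\approx$-classes. To bound $c$, form the quotient multigraph $\bar\Gamma$ of $\Gamma$ obtained by collapsing vertices according to $\approx$ and, for each $\sim$-class, merging its edges into a single edge; the vertex identifications are precisely those that make this merging consistent. Then $\bar\Gamma$ is a quotient of the connected graph $\Gamma$, hence connected; it has $c$ vertices and exactly $J$ edges; and a connected graph with $J$ edges has at most $J+1$ vertices. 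Therefore $c\le J+1$, and summing $N^{c}\le N^{J+1}$ over the at most $C_K$ choices of $\sim$ yields the bound $C_K N^{J+1}$.

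I expect the only point requiring genuine care to be the bookkeeping in the middle step: that fixing the edge-coincidence pattern $\sim$ reduces the count to $N^{c}$ with $c$ the number of vertex classes, and that this count overestimates the number of tuples actually realizing the partition $(m_1,\dots,m_J)$. Once that is set up, the inequality $c\le J+1$ is immediate from connectedness of the quotient of a cycle, and the extra power of $N$ beyond $N^{J}$ arises here exactly as it did in the proof of Proposition~\ref{prop:easier}.
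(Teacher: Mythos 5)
Your proof is correct, and it follows the same overall strategy as the paper: fix an equivalence relation on the $2K$ factor-pairs with $J$ classes (there are $O_K(1)$ such relations), show each fixed relation contributes at most $N^{J+1}$ tuples, and sum. Where you genuinely diverge is in how you establish the $N^{J+1}$ bound for a fixed relation. The paper gives a procedural ``free coordinate'' argument: pick a class, declare two of its coordinates free, observe the rest of the class is bound, then find an adjacent class (using that a single class cannot contain all $z_k$ when $J>1$) which costs exactly one additional free coordinate, and iterate to reach $J+1$. You instead pass to the quotient multigraph $\bar\Gamma$ of the $2K$-cycle: collapse vertices by the relation $\approx$ generated by the required endpoint identifications and merge each $\sim$-class of edges into one edge, so $\bar\Gamma$ is a connected multigraph with $J$ edges, hence has at most $J+1$ vertices, which bounds the number of free values by $N^{J+1}$. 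This recasting of the paper's bookkeeping as the trivial fact that a connected multigraph with $J$ edges has at most $J+1$ vertices is tidier, sidesteps the somewhat delicate ``there must exist an adjacent class'' step, and makes the appearance of the exponent $J+1$ structurally transparent; it also makes visible that only connectedness of $\Gamma$ is used, not the full cycle structure. Your observation that it suffices to count assignments whose coincidence pattern merely \emph{contains} $\sim$ is the same overcounting implicitly used in the paper, and is handled cleanly.
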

\noindent This will be proved below.

The number of possible partitions is a function of $2K$.
\[
\expect_\omega\Big(
K_\omega(x_1,y_1)K_\omega(x_2,y_1)K_\omega(x_2,y_2) \cdots K_\omega(x_1,y_{K})
\Big)
\le C^K\prod_j p = C^Kp^J.
\]
The product $C^Kp^J\cdot N^{J+1}$ is $\le C^K N(Np)^K$ since $J\le K$.
Summing these upper bounds for expected values over all 
$(x_1,y_1,x_2,y_2,\cdots,x_{K},y_{K})$
associated to a given partition, then summing over all partititions, yields
\begin{equation}
\expect_\omega \trace((T^*T)^K)
\le C_K N (Np)^K,
\end{equation}
whence
$\scriptt_\omega = \scriptt_\omega^{(2)}$ satisfies
$\expect_\omega\norm{\scriptt_\omega}_{\text{op}}
\le C_K(Np)^{-1}N^{1/K}(Np)^{1/2}
= C_K N^{1/K}(Np)^{-1/2}$.
Since $K$ may be taken to be arbitrarily large, this establishes
Lemma~\ref{lemma:morerandombasecase}.
\end{proof}

\begin{proof}[Proof of Lemma~\ref{lemma:doti}]
Write $x_{K+1}=x_1$ to facilitate the discussion. 
If $J=1$ then $(x_1,y_1)=(x_i,y_i)$ for all $2\le i\le K$,
and there are $N^2=N^{J+1}$ possible values of $(x_1,y_1)$.
If $J>1$,
set $z_1=(x_1,y_1)$, $z_2=(x_2,y_1)$, 
$z_3=(x_2,y_2)$, $z_4=(x_3,y_2)$, \dots $z_{2K-1}=(x_K,y_K)$,
$z_{2K}=(x_1,y_K)$.
To the partition $(m_1,m_2,\cdots,m_J)$ of $2K$
we associate all possible equivalence relations on
$\{z_k: 1\le k\le 2K\}$
such that there are $J$ equivalence classes, with
$m_1,m_2,\cdots,m_J$ elements.
Such an equivalence relation  is said to be {\em feasible}
if there exist values of the $z_k\in [1,N]$
such that $z_l=z_k$ if and only if
$z_l,z_k$ belong to the same equivalence class.
The number of equivalence relations is a function of $K$ alone,
so it suffices to bound the 
number of points
$(x_1,y_1,x_2,y_2,\cdots,x_{K},y_{K})\in[1,N]^{2K}$
which give rise to one equivalence relation.

Consider any feasible equivalence relation associated to the partition
$(m_1,\cdots,m_J)$.
Choose some equivalence class with $m_1$ elements $z_k$.
Choose two coordinates, $x_i$ and $y_i$ or
$x_{i+1}$ and $y_i$, which determine all $z_k$ in this class.
These coordinates are said to be free, while
any $x_l$ or $y_l$ which is one of the two coordinates
of some $z_k$ in this class, is said to be bound.
Thus the first equivalence class accounts for exactly two free coordinates.

There must exist either $z_k=(x_i,y_i)$ in this class such that
$(x_{i+1},y_i)$ does not belong to this class,
or $z_k=(x_{i+1},y_i)$ such that $(x_{i+1},y_i)$ does not belong to this class;
otherwise the class would include every $z_k$, which is impossible since $J>1$.
In the first case, $(x_{i+1},y_i)$ belongs to a second equivalence class.
The coordinates of any other $z_k$ in this second class are determined
by $x_{i+1},y_i$. $y_i$ is a coordinate of some element of the first
class.
$x_{i+1}$ cannot be a coordinate of some element of the first
class, since $(x_{i+1},y_i)$ would belong to that class.
Designate $x_{i+1}$ to be a free coordinate, all coordinates
of all other $z_l$ in the second class are determined
by $x_{i+1}$ and $y_i$, hence by $x_{i+1}$ together with 
the two free coordinates associated to the first class.
Thus three free coordinates (together with the equivalence relation itself)
are required to determine
all coordinates of all points in the union of the first two classes.
Repeating this reasoning, we obtain if $J>2$ a third class and one additional
free coordinate, and so on. Proceeding through all $J$ classes,
a total of $J+1$ free coordinates are obtained.
Each of these coordinates can take on $N$ values, so in all there are
$N^{J+1}$ possible points associated to an individual equivalence relation
associated to a partition with $J$ elements.
\end{proof}

 \end{document}